
\documentclass[11pt]{smfart}
\usepackage{color}
\usepackage{amssymb,verbatim}
\usepackage{hyperref}
\usepackage{amsthm,array,amssymb,amscd,amsfonts,amssymb,latexsym, url}
\usepackage{amsmath}
\usepackage[all]{xy}
\usepackage[french]{babel}

\newcounter{spec}
{\end{list}}

\renewcommand{\P}{{\mathbf P}}

\newcommand{\Z}{{\mathbb Z}}
\newcommand{\Q}{{\mathbb Q}}
\newcommand{\C}{{\mathbb C}}

\newcommand{\ovk}{{\overline k}}

\newcommand{\ovX}{{\overline X}}

\newcommand{\Gal}{{\operatorname{Gal }}}
\newcommand{\Br}{{\operatorname{Br   }}}

\newcommand{\Ker}{{\operatorname{Ker}}}

\renewcommand{\lim}{\varprojlim}

\numberwithin{equation}{section}

\newfont{\gothic}{eufb10}


\newtheorem{theo}{Th\'{e}or\`{e}me}[section]
\newtheorem{prop}[theo]{Proposition}

\newtheorem{lem}[theo]{Lemme}
\newtheorem{cor}[theo]{Corollaire}
\theoremstyle{definition}
\newtheorem{defi}[theo]{D\'efinition}
\theoremstyle{remark}
\newtheorem{rema}[theo]{Remarque}

\newcommand{\bthe}{\begin{theo}}
\newcommand{\ble}{\begin{lem}}
\newcommand{\bpr}{\begin{prop}}
\newcommand{\bco}{\begin{cor}}
\newcommand{\bde}{\begin{defi}}
\newcommand{\ethe}{\end{theo}}
\newcommand{\ele}{\end{lem}}
\newcommand{\epr}{\end{prop}}
\newcommand{\eco}{\end{cor}}
\newcommand{\ede}{\end{defi}}

\newcommand{\Pic}{\operatorname{Pic}}

\newcommand{\F}{{\mathbb F}}

\def\X{{\overline X}}

\def\k{{\overline k}}
%

\DeclareFontFamily{U}{wncy}{}
\DeclareFontShape{U}{wncy}{m}{n}{%
<5>wncyr5%
<6>wncyr6%
<7>wncyr7%
<8>wncyr8%
<9>wncyr9%
<10>wncyr10%
<11>wncyr10%
<12>wncyr6%
<14>wncyr7%
<17>wncyr8%
<20>wncyr10%
<25>wncyr10}{}
\DeclareMathAlphabet{\cyr}{U}{wncy}{m}{n}

 \begin{document}

  \title[Surfaces stablement rationnelles sur un corps quasi-fini] {Surfaces stablement rationnelles sur un corps quasi-fini}

\author{J.-L. Colliot-Th\'el\`ene}
\address
{Universit\'e Paris Sud\\Math\'ematiques, B\^atiment 425\\91405 Orsay Cedex\\France}
\email{jlct@math.u-psud.fr }

\date{soumis le 27 novembre 2017; r\'evis\'e le 14 juin 2018}
\maketitle

 \begin{abstract}
 Sur un corps fini  $\F$,
 toute $\F$-surface stablement $\F$-rationnelle est $\F$-rationnelle.
 Plus g\'en\'eralement, si une $\F$-surface $X$ projective et lisse
 et g\'eom\'etriquement rationnelle n'est pas $\F$-rationnelle, alors
 il existe une extension finie $\F'$ de $\F$ avec $\Br(X_{\F'}) \neq 0$.
 Ceci vaut plus g\'en\'eralement pour une telle surface $X$ sur un corps $k$ quasi-fini  
  d\`es que $X(k) \neq \emptyset$.
  \end{abstract}

\section{Introduction}

L'\'enonc\'e suivant est essentiellement connu  (cf. \cite[\S 1, \S 2]{CTSRequiv}, \cite[\S 2.A]{CTSDesc}).
\begin{theo}\label{implgen} Soit $k$ un corps et $\ovk/k$ une cl\^oture 
s\'eparable  de $k$.
Soit $X$ une $k$-vari\'et\'e projective, lisse.
Soit $\ovX = X \times_{k} \ovk$.
Supposons que  $\ovX$ est $\ovk$-rationnelle, i.e $\ovk$-birationnelle  \`a un espace projectif.
Consid\'erons les conditions suivantes.

(i) La $k$-vari\'et\'e  $X$ est $k$-rationnelle.

(ii) La $k$-vari\'et\'e $X$ est stablement $k$-rationnelle.

(iii) La $k$-vari\'et\'e  $X$ est facteur direct d'une vari\'et\'e $k$-rationnelle,
  c'est-\`a-dire qu'il existe   une $k$-vari\'et\'e $Y$  projective et lisse, g\'eom\'etriquement connexe, telle
que $X \times_{k} Y$ est $k$-birationnelle \`a un espace projectif.

(iv) Le module galoisien   $\Pic(\X)$ est stablement de permutation, c'est-\`a-dire qu'il existe des modules
de permutation de type fini $P_{1}$ et $P_{2}$ et un isomorphisme de modules galoisiens $\Pic(\X) \oplus P_{1} \simeq P_{2}$.

(v) Le module galoisien   $\Pic(\X)$ est un facteur direct d'un module de permutation, c'est-\`a-dire qu'il existe 
un module galoisien $M$, un module de de permutation de type fini $P$ et un 
isomorphisme de modules galoisiens $\Pic(\X) \oplus M \simeq P$.

(vi) Pour toute extension finie s\'eparable $k'/k$, on a $H^1(k', \Pic(\X))=0$.

(vii) Pour toute extension finie s\'eparable $k'/k$, l'application naturelle de groupes de Brauer $\Br(k') \to \Br(X_{k'})$ est surjective.
 
Alors : (i) implique (ii), qui  implique (iii); (ii) implique (iv);  (iii) implique (v); (iv) implique (v);
(v) implique (vi); (vi) implique (vii).
\end{theo}
\begin{proof}
Pour l'implication (i) implique (iv), voir  \cite[Prop. 2A1, p. 461]{CTSDesc}.
Le fait que (ii) implique (iv) et  que (iii) implique (v) r\'esulte alors du calcul du groupe
de Picard d'un produit \cite[Lemme 11]{CTSRequiv}.  L'implication (v) implique (vi)
r\'esulte du fait que pour tout module $P$ galoisien de  permutation et toute
extension finie s\'eparable $k'/k$, on a $H^1(k',P)=0$.
Pour  $X$ une $k$-vari\'et\'e projective, lisse, g\'eom\'etriquement  int\`egre, 
et toute
extension finie s\'eparable $k'/k$,
on a la suite exacte  (cf. \cite[(1.,5.0), p. 386]{CTSDesc}) :
$$ \Br(k') \to \Ker[\Br(X _{k'}) \to \Br(\X)] \to H^1(k', \Pic(\X))).$$
 Si de plus $\ovX$ est $\ovk$-rationnelle, alors
$\Br(\X)=0$. Cette annulation est bien connue en caract\'eristique z\'ero,
et l'argument vaut encore pour la
$\ell$-torsion de $\Br(\X)$ pour
 $\ell$ un nombre premier  premier \`a l'exposant caract\'eristique de $k$.
  Que $\Br(\X)=0$ vaut pour tout corps s\'eparablement clos 
  $\ovk$ et $\X$   $\ovk$-rationnelle
  se voit en combinant  \cite[Cor. 5.8]{GrBr3} et \cite[Prop. 2.1.9]{CTBarbara}.
Ainsi (vi) implique (vii).
\end{proof}
On dit qu'une $k$-surface projective, lisse, g\'eom\'etriquement rationnelle,
est d\'eploy\'ee par une sous-extension galoisienne $K \subset \ovk$
si $X(K) \neq \emptyset$ et si l'inclusion naturelle de r\'eseaux $\Pic(X_{K}) \to \Pic(\ovX)$ est un isomorphisme.
Sous l'hypoth\`ese $X(k) \neq \emptyset$,  ceci \'equivaut au fait
que ${\rm Gal}(\ovk/K)$ agit trivialement sur le r\'eseau $\Pic(\ovX)$.
 
\begin{theo}\label{vraiprincipal}
Soient $k$ un corps et $X$ une $k$-surface projective, lisse, g\'eom\'etriquement rationnelle.
Supposons que $X$ poss\`ede un point $k$-rationnel et que
$X$ soit d\'eploy\'ee par une extension cyclique de $k$.
Si  $X$ n'est pas  $k$-rationnelle, alors  il existe une extension finie 
s\'eparable $k'/k$ telle que $H^1(k', \Pic(\X))\neq 0$, et alors
$X$ n'est pas stablement $k$-rationnelle.
\end{theo}

La d\'emonstration   sera donn\'ee au \S 4 (th\'eor\`eme \ref{vraiprincipalbis}),
o\`u l'on regroupe les r\'esultats du \S 2 (surfaces fibr\'ees en coniques) et  du \S 3 (surfaces de del Pezzo).
C'est une d\'emonstration cas par cas, qui s'appuie de fa\c con essentielle sur  les tables 
\'etablies par divers auteurs
sur l'action du groupe  de Galois sur le groupe de Picard des surfaces de del Pezzo
de degr\'e  3,2,1.

L'\'enonc\'e du th\'eor\`eme  \ref{vraiprincipal}
est \`a comparer avec les exemples donn\'es dans \cite{BCTSaSD}. 
Si $k$ est un corps de caract\'eristique diff\'erente de 2, et  $P(x) \in k[x]$ un polyn\^ome s\'eparable et irr\'eductible
de degr\'e 3, de discriminant $a \in k^{*}$ non carr\'e, on montre que la $k$-surface d'\'equation affine
$ y^2-az^2=P(x)$
est stablement $k$-rationnelle mais non $k$-rationnelle.
Il existe donc de telles surfaces sur tout 
  corps $ k$ de caract\'eristique diff\'erente de 2
poss\'edant une extension de corps galoisienne de groupe $\frak{S}_{3}$, 
par exemple le corps
des rationnels ou le corps $F=\C(t)$ des fractions rationnelles en une variable sur les complexes.

Par d\'efinition, un corps quasi-fini est un corps parfait dont la cl\^oture galoisienne
est le groupe procyclique $\hat{\Z}$  \cite[Chap. XIII, \S 2]{serre}. Les deux types d'exemples
classiques sont : les corps finis et les corps de s\'eries formelles d'une variable sur un corps
alg\'ebriquement clos de caract\'eristique z\'ero.

Si $k$ est un corps fini, ou un corps de s\'eries formelles d'une variable $\C((t))$
sur un corps alg\'ebriquement clos $\C$ de caract\'eristique z\'ero,
toute $k$-surface projective, lisse, g\'eom\'etriquement rationnelle
poss\`ede un $k$-point (Proposition \ref{C1}), et toute extension finie de corps
$K/k$ est cyclique. Toute conique projective et lisse sur $k$ est $k$-isomorphe \`a $\P^1_{k}$.

Le th\'eor\`eme \ref{vraiprincipal} et le th\'eor\`eme \ref{implgen}
donnent alors l'\'enonc\'e suivant,
qui pour un corps fini r\'epond \`a une question
de B. Hassett mentionn\'ee par A. Pirutka dans \cite{P}.

\begin{theo}\label{principal}
Soient $k$ un corps et $X$ une $k$-surface projective, lisse, g\'eom\'etriquement rationnelle.
Sous l'une des hypoth\`eses suivantes :

(i) $X$ poss\`ede un $k$-point et le corps $k$ est quasi-fini;

(ii) le corps $k$ est un corps fini;

(iii) le corps $k=\C((t))$ est le corps des s\'eries formelles en une variable sur un corps $\C$  alg\'ebriquement clos de caract\'eristique z\'ero.

on a :

(a) Si pour toute extension finie de corps $k'/k$ on a $H^1(k', \Pic(\X))=0$, alors $X$ est $k$-rationnelle.

 (b) Les conditions (i) \`a (vii)   du th\'eor\`eme \ref{implgen} sont \'equivalentes pour $X$. En particulier, sur un tel corps $k$, toute $k$-surface  stablement $k$-rationnelle est $k$-rationnelle.
\end{theo}

\begin{cor} Soient $k$ un corps et $X$ une $k$-surface projective, lisse, g\'eom\'etriquement rationnelle.
Sous l'une des hypoth\`eses suivantes :

(i) $X$ poss\`ede un $k$-point et le corps $k$ est quasi-fini;

(ii) le corps $k$ est un corps fini;

(iii) le corps $k=\C((t))$ est le corps des s\'eries formelles en une variable sur un corps $\C$  alg\'ebriquement clos de caract\'eristique z\'ero.

Sous  l'une quelconque des trois  hypoth\`eses suivantes :

(a) le pgcd des degr\'es des extensions finies   $K/k$ telles que $X_{K}$ est stablement $K$-rationnelle est \'egal \`a 1.

(b) $X$ est $k$-unirationnelle, et le pgcd des degr\'es des $k$-applications rationnelles dominantes 
g\'en\'eriquement s\'eparables de $\P^2_{k}$ vers $X$ est \'egal \`a 1,

(c) le groupe de Chow des z\'ero-cycles de $X$ est universellement trivial,

\noindent  la surface $X$ est $k$-rationnelle.
\end{cor}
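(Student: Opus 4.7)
Le plan est d'appliquer le th\'eor\`eme \ref{principal}(a). Sous chacune des hypoth\`eses (i), (ii), (iii), le corps $k$ est quasi-fini, $X$ admet un $k$-point (dans les cas (ii) et (iii) par la proposition \ref{C1}), et toute extension finie s\'eparable $k'/k$ demeure quasi-finie. Il suffit donc de v\'erifier que $H^1(k',\Pic(\ovX))=0$ pour toute telle $k'$. J'examine successivement les trois hypoth\`eses (a), (b), (c).

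Sous (a), l'implication (ii)$\Rightarrow$(vi) du th\'eor\`eme \ref{implgen}, appliqu\'ee \`a $X_{K_i}$, donne $H^1(L,\Pic(\ovX))=0$ pour toute extension finie s\'eparable $L/K_i$. Prenant $L=K_ik' \subset \ovk$, la formule restriction-cor\-es\-tric\-tion entra\^ine que l'entier $[K_ik':k']$, qui divise $[K_i:k]$, annule $H^1(k',\Pic(\ovX))$ ; comme le pgcd des $[K_i:k]$ vaut 1, il en est de m\^eme du pgcd des $[K_ik':k']$, d'o\`u l'annulation recherch\'ee.

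Sous (b), je montre que (b) entra\^ine (c). Une application rationnelle dominante g\'en\'eriquement s\'eparable $f_i:\P^2_k\dashrightarrow X$ de degr\'e $d_i$ se r\'esout par \'eclatements successifs sur $\P^2_k$ en un morphisme $\tilde f_i:Y_i\to X$, o\`u $Y_i$ est une surface projective lisse $k$-rationnelle. L'identit\'e $(\tilde f_i)_*(\tilde f_i)^*=d_i\cdot\mathrm{id}$ sur $CH_0(X_F)$, combin\'ee \`a $CH_0(Y_{i,F})=\Z$ pour toute extension $F/k$ (car $Y_{i,F}$ est $F$-rationnelle), donne $d_i\cdot A_0(X_F)=0$ ; comme le pgcd des $d_i$ vaut 1 et comme $X(k)\neq\emptyset$ (assurant la surjectivit\'e du degr\'e), on en d\'eduit la trivialit\'e universelle de $CH_0(X)$, c'est-\`a-dire l'hypoth\`ese (c).

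Sous (c), je m'appuie sur la d\'ecomposition de la diagonale \`a la Bloch-Srinivas : la trivialit\'e universelle de $CH_0(X_{k'})$ (pr\'eserv\'ee par changement de base) implique $\Br(X_{k'})/\Br(k')=0$. La suite exacte de bas degr\'e issue de la suite spectrale de Hochschild-Serre, utilisant $\Br(\ovX)=0$ et $X(k')\neq\emptyset$, se lit
$$ 0 \to \Br(X_{k'})/\Br(k') \to H^1(k',\Pic(\ovX)) \to H^3(k',\ovk^*). $$
Le point d\'elicat sera d'\'etablir $H^3(k',\ovk^*)=0$ pour $k'$ quasi-fini : en d\'ecomposant $\ovk^*$ en sa torsion $\mu$ (o\`u la cohomologie en degr\'e $\ge 2$ s'annule par $\cd(k')\le 1$ sur les coefficients de torsion) et son quotient uniquement divisible (cohomologie nulle en degr\'e $\ge 1$, \'etant un $\Q$-espace vectoriel), on obtient l'annulation voulue. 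La nullit\'e conjointe de $H^3(k',\ovk^*)$ et de $\Br(X_{k'})/\Br(k')$ force alors $H^1(k',\Pic(\ovX))=0$.
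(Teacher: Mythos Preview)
Your argument is correct, but it follows a different path from the paper's. The paper proceeds uniformly: it first reduces both (a) and (b) to (c) by citing \cite[Prop.~6.4]{CTCoray} (and \cite[Chap.~16]{Fulton} in positive characteristic), and then, under (c), invokes the result of \cite[Appendix~A]{Gille} that universal $CH_0$-triviality forces $\Pic(\ovX)$ to be a direct summand of a permutation module; this gives $H^1(E,\Pic(\ovX))=0$ for \emph{every} field extension $E/k$, not only the finite separable ones, and the th\'eor\`eme~\ref{principal} concludes.

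Your route differs in two places. For (a) you bypass (c) entirely with a clean restriction--corestriction argument; this is perfectly valid and arguably more elementary. For (c) you avoid the permutation-module input from \cite{Gille} and instead exploit the low-degree Hochschild--Serre sequence together with $\Br(X_{k'})/\Br(k')=0$ (from the integral decomposition of the diagonal) and $H^3(k',\ovk^{*})=0$ (from $\cd(k')\leq 1$). This is correct, but note what each approach buys: the paper's argument yields the stronger intermediate statement that $\Pic(\ovX)$ is coflasque (direct summand of a permutation module), valid over an arbitrary base field; your argument is more self-contained but is genuinely tied to the quasi-finite hypothesis through the vanishing of $H^3(k',\ovk^{*})$. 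Your treatment of (b) is essentially the paper's argument, spelled out.
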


\begin{proof} 
On note $CH_{0}(X)$ le groupe de Chow des classes de z\'ero-cycles sur $X$,
et $A_{0}(X)$ le sous-groupe des classes de z\'ero-cycles de degr\'e z\'ero.
Ces groupes sont des invariants $k$-birationnels des vari\'et\'es projectives, lisses,
g\'eom\'etriquement int\`egres.
Ceci est \'etabli dans \cite[Prop. 6.3]{CTCoray}
avec quelques restrictions, par exemple en caract\'eristique z\'ero. 
La d\'emonstration de  \cite[Example 16.1.11]{Fulton} 
vaut sur un corps quelconque.

L'hypoth\`ese (c) est que pour toute extension de corps $F/k$
l'application degr\'e $deg_{F} : CH_{0}(X_{F}) \to \Z$ est un isomorphisme,
ce qui est le cas pour $X$ projective, lisse, int\`egre, stablement $k$-rationnelle.

En caract\'eristique z\'ero,
d'apr\`es \cite[Prop. 6.4]{CTCoray}, chacune des hypoth\`eses (a) et (b) sur la surface $X$
implique (c). Via les correspondances \cite[Chap. 16]{Fulton} 
on voit que ceci vaut sur un corps quelconque.

 Sous l'hypoth\`ese de (c), le module galoisien $ \Pic(\X)$ est un facteur direct 
d'un module de permutation. Pour la d\'emonstration, je renvoie \`a  \cite[Appendix A]{Gille}.
Pour toute extension  de corps $E/k$,
on a  donc $H^1(E, \Pic(\X))=0$. Le th\'eor\`eme \ref{principal} permet de conclure.
\end{proof}

 Rappelons la classification $k$-birationnelle des $k$-surfaces projectives, lisses,
  g\'eom\'etriquement rationnelles, due \`a Enriques, Manin, Iskovskikh \cite{Isk79}, et Mori.
 
\begin{theo}\label{classif}
Soit $k$ un corps. Toute $k$-surface projective, lisse, g\'eom\'etriquement rationnelle,
$k$-minimale, appartient \`a au moins un des types suivants :
  
(i) Surface fibr\'ee en coniques relativement minimale au-dessus d'une conique lisse.

(ii) Surface de del Pezzo de degr\'e $d$ avec $1 \leq d \leq 9$.
\end{theo}

Comme observ\'e par Manin et l'auteur (cf. \cite[Thm. IV.6.8]{kollar}),
 ceci permet de d\'emontrer le r\'esultat suivant, qui pour $k$ un corps fini,
admet une d\'emonstration uniforme (A. Weil, cf. \cite[Thm. 27.1, Cor. 27.1.1]{M}).

\begin{prop}\label{C1}
Soient $k$ un corps et $X$ une $k$-surface projective et lisse g\'eom\'etriquement rationnelle.
Si $k$ est un corps $C_{1}$, alors $X$ poss\`ede un point $k$-rationnel.
 Ceci vaut en particulier pour $k$ un corps fini et pour $k=\C((t))$.
 \end{prop}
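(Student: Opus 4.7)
L'id\'ee est de se ramener au cas $k$-minimal et d'appliquer la classification du Th\'eor\`eme \ref{classif}. Soit $\pi : X \to X'$ une contraction vers un mod\`ele $k$-minimal, obtenue en contractant des configurations galoisiennes de $(-1)$-courbes. Par le th\'eor\`eme de Lang--Nishimura appliqu\'e \`a l'application rationnelle inverse $X' \dashrightarrow X$, avec $X$ projective et lisse, l'existence d'un $k$-point sur $X'$ entra\^ine celle d'un $k$-point sur $X$. On peut donc supposer $X$ $k$-minimale et invoquer le Th\'eor\`eme \ref{classif}.

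Si $X$ est fibr\'ee en coniques au-dessus d'une conique lisse $C$, alors $C$ est d\'efinie par une forme de degr\'e $2$ en $3$ variables, donc $C(k) \neq \emptyset$ par $C_1$, et $C \simeq \P^1_k$. La fibre g\'en\'erique de $X \to \P^1_k$ est une conique sur $k(t)$, corps qui est $C_2$ par le th\'eor\`eme de Tsen ; elle poss\`ede donc un point $k(t)$-rationnel, d'o\`u une section rationnelle $\P^1_k \dashrightarrow X$, qui se prolonge en un morphisme $\P^1_k \to X$ par propret\'e de $X$ et lissit\'e de $\P^1_k$, fournissant des $k$-points sur $X$.

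Dans le cas des surfaces de del Pezzo de degr\'e $d \in \{1, \dots, 9\}$, on utilise que $\Br(k) = 0$ pour $k$ corps $C_1$. Les degr\'es ``faciles'' se traitent directement : $d=9$ (surface de Severi-Brauer, donc isomorphe \`a $\P^2_k$) ; $d=8$ $k$-minimale (quadrique lisse dans $\P^3$, degr\'e $2$ en $4$ variables, $C_1$) ; $d=5$ (classique : $X(k) \neq \emptyset$ sur tout corps, d'apr\`es Enriques et Swinnerton-Dyer) ; $d=3$ (surface cubique dans $\P^3$, degr\'e $3$ en $4$ variables, $C_1$) ; $d=1$ (le syst\`eme anticanonique $|-K_X|$ poss\`ede un unique point de base, n\'ecessairement $k$-rationnel). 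Les trois cas restants $d \in \{6, 4, 2\}$ sont plus d\'elicats : on les traite, suivant \cite[Thm.~IV.6.8]{kollar}, par une analyse fine de l'action de Galois sur le r\'eseau $\Pic(\X)$ et sur la configuration des $(-1)$-courbes, sous la contrainte de $k$-minimalit\'e, combin\'ee \`a l'annulation de $\Br(k)$ et \`a la propri\'et\'e $C_1$. L'obstacle principal r\'eside pr\'ecis\'ement dans cette analyse non uniforme des trois cas $d \in \{6,4,2\}$ : pour $k$ fini, on y \'echappe via l'argument uniforme de Weil, qui fournit $|X(k)| = 1 + q\, t + q^2$ avec $|t| \leq \mathrm{rang}(\Pic(\X)) \leq 9$, entra\^inant $|X(k)| > 0$ pour $q$ assez grand, les petites valeurs de $q$ se traitant par inspection directe.
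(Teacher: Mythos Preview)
Your overall strategy coincides with what the paper indicates: reduce to $k$-minimal surfaces, invoke the classification (Theorem~\ref{classif}), treat the resulting cases following \cite[Thm.~IV.6.8]{kollar}, and note Weil's uniform argument over finite fields. The paper itself gives no more detail than this.

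There is, however, a genuine error in your treatment of the conic-bundle case. You assert that the generic fibre, a conic over $k(t)$, has a rational point because $k(t)$ is $C_2$. But the $C_2$ property only guarantees a zero for a form of degree $d$ in more than $d^2$ variables; a conic is a quadratic form in $3\le 4$ variables, so $C_2$ says nothing. Concretely, for $k=\F_q$ the Brauer group of $\F_q(t)$ has nontrivial $2$-torsion, so there are conics over $\F_q(t)$ with no rational point. (The attribution is also off: Tsen's theorem concerns function fields over algebraically closed fields; the step ``$k$ is $C_1$ $\Rightarrow$ $k(t)$ is $C_2$'' is Lang's transitivity theorem.)

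The repair is in fact simpler than what you attempted. Once the base conic $C$ has a $k$-point (by $C_1$), choose any $P\in C(k)$ and look at the fibre $X_P$ over $k$. If $X_P$ is smooth, it is a plane conic over $k$ and acquires a $k$-point by $C_1$; if $X_P$ is degenerate, it consists of two (possibly conjugate) lines meeting transversally at a node which is $k$-rational. Either way $X(k)\neq\emptyset$.

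A minor remark on your closing sentence: no separate inspection of small $q$ is needed in Weil's argument. The Lefschetz formula gives $|X(\F_q)| = 1 + q\,t + q^2$ with $t\in\Z$ the trace of Frobenius on $\Pic(\ovX)$; since $|X(\F_q)|$ is a non-negative integer congruent to $1$ modulo $q$, it is automatically at least $1$.
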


\section{Fibr\'es en coniques}

Soient $k$ un corps, $\ovk$ une cl\^{o}ture s\'eparable de $k$ et $g={\rm Gal}(\k/k)$.
Si $X$ est une $k$-surface projective lisse g\'eom\'etriquement connexe
munie d'un $k$-morphisme $f :X \to \P^1_{k}$  relativement minimal
dont la fibre g\'en\'erique est une courbe lisse de genre z\'ero,
alors les points ferm\'es $M$ dont la fibre $X_{M}/k(M)$ est non lisse
ont leur corps r\'esiduel $k(M)$ s\'eparable sur $k$, et $X_{M}/k(M)$
se d\'ecompose sur une extension quadratique s\'eparable de corps $L(M)/k(M)$
en deux droites $\P^1_{L(M)}$ qui se coupent transversalement en un $k(M)$-point.
On appelle ces points ferm\'es $M \in \P^1_{k}$ les points de mauvaise r\'eduction de la fibration.
On renvoie \`a \cite{Isk79} pour la d\'emonstration de ces faits.
Ils impliquent que sur une cl\^oture s\'eparable $\k$ de $k$, 
il existe une contraction $\ovX \to Y$ au-dessus de $\P^1_{\k}$ telle que
les fibres de $Y \to \P^1_{\k}$ soient toutes isomorphes \`a $\P^1$.
Ceci d\'efinit donc un \'el\'ement de $\Br(\P^1_{\k})=\Br(\k)=0$.
L'\'egalit\'e $ \Br(\P^1_{\k})=0 $ r\'esulte du th\'eor\`eme de Tsen lorsque $\k$ est alg\'ebriquement clos,
et ce m\^eme th\'eor\`eme implique que $ \Br(\P^1_{\k})$ est $p$-primaire pour $\k$ s\'eparablement clos
de caract\'eristique $p>0$. Que l'on ait $ \Br(\P^1_{\k})=0 $ pour $\k$ s\'eparablement clos quelconque
est \'etabli par Grothendieck \cite[Cor. 5.8]{GrBr3}.
La fibre g\'en\'erique de $\overline{f} : \X \to \P^1_{\k}$ admet donc un point rationnel.
Comme $\P^1_{\k}$ est r\'egulier de dimension 1 et $\overline{f}$ est un morphisme propre,
tout tel point rationnel s'\'etend en une section de $\overline{f} : \X \to \P^1_{\k}$.
La $k$-vari\'et\'e $X$
est d\'eploy\'ee sur $\ovk$.
La fibre g\'en\'erique de la fibration $f : X \to \P^1_{k}$ correspond  \`a un \'el\'ement $\beta$ de
$H^2(g, \k(\P^1)^*)$. En un point ferm\'e $M$ de mauvaise r\'eduction, la fl\`eche diviseur d\'efinit
 une application $g$-\'equivariante
$$ \k(\P^1)^*  \to \oplus_{k(M) \subset \k}  \ \Z,$$
o\`u $k(M) \subset \k$ parcourt les $k$-plongements de l'extension s\'eparable $k(M)/k$
dans $\k$.
Cet homomorphisme induit une fl\`eche r\'esidu
$$\partial_{M} : H^2(g, \k(\P^1)^*) \to H^2(g,  \oplus_{k(M) \subset \k} \  \Z) = H^2(k(M), \Z) = H^1(k(M), \Q/\Z).$$
L'image de $\beta$ par cette application d\'ecrit l'extension quadratique s\'eparable de $k(M)$
correspondant \`a la mauvaise fibre. Pour $k \subset L \subset \k$, avec $L/k$ finie,
on a le diagramme commutatif suivant :
$$\begin{array}{ccccccccc}
 H^2(g_{k}, \k(\P^1)^*) & \to &  H^1(k(M), \Q/\Z)\\
 \downarrow&&\downarrow& \\
 H^2(g_{L}, \k(\P^1)^*) & \to  &\oplus_{N \to M} H^1(k(N),\Q/\Z),
\end{array}$$ 
 o\`u $N$ parcourt les points ferm\'es de $\P^1_{L}$ d'image $M$ via la projection $\P^1_{L} \to \P^1_{k}$,
 o\`u les fl\`eches horizontales sont les fl\`eches de r\'esidu  d\'efinies ci-dessus et les fl\`eches
verticales sont les fl\`eches de restriction.

\begin{lem}\label{utile} 
Soit  $f: X \to \P^1_{k}$ un fibr\'e en coniques relativement minimal 
sur un corps $k$, \`a fibre g\'en\'erique lisse et espace total lisse sur $k$.
Soit   $M \in \P^1_{k}$
un point ferm\'e  de corps r\'esiduel $k(M)$
o\`u la fibration a mauvaise r\'eduction.
Soit $K=k(M)$ le corps r\'esiduel en $M$.
Soit  $N$ un  $K$-point of $\P^1_{K}$ au-dessus de  $M \in \P^1_{k}$.
La fibration  $f_{K}: X_{K}\to \P^1_{K}$ a mauvaise r\'eduction en $N$.
\end{lem}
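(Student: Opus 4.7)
Mon plan est d'appliquer le diagramme commutatif de r\'esidus
rappel\'e juste avant l'\'enonc\'e du lemme, dans le cas particulier
o\`u $L = K = k(M)$. La classe
$\beta \in H^2(g_{k}, \k(\P^1)^*)$ correspondant \`a la fibre
g\'en\'erique de $f$ admet un r\'esidu
$\partial_{M}(\beta) \in H^1(k(M), \Q/\Z)$ non nul, puisque cet
\'el\'ement d\'ecrit l'extension quadratique s\'eparable $L(M)/k(M)$
sur laquelle la fibre au-dessus de $M$ se d\'ecompose en deux
droites, et cette extension est non triviale par d\'efinition d'un
point de mauvaise r\'eduction.

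Il me faut ensuite identifier la composante index\'ee par $N$ de la
fl\`eche verticale de droite du diagramme. Puisque $N$ est un
$K$-point de $\P^1_{K}$ au-dessus de $M$, son corps r\'esiduel
$k(N)$ co\"{\i}ncide avec $K=k(M)$, et l'inclusion
$k(M) \hookrightarrow k(N)$ d\'eduite de la projection
$\P^1_{K} \to \P^1_{k}$ est un $k$-plongement de $K$ dans $K$,
donc un $k$-automorphisme de $K$. Par fonctorialit\'e, la fl\`eche
verticale de droite, projet\'ee sur la composante $N$, s'identifie
alors \`a l'automorphisme de $H^1(K, \Q/\Z)$ induit par ce
$k$-automorphisme de $K$, et en particulier c'est un isomorphisme.

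La conclusion sera alors imm\'ediate : l'image de
$\partial_{M}(\beta)$ dans la composante $N$ reste non nulle, et la
commutativit\'e du diagramme donne $\partial_{N}(\beta_{K}) \neq 0$,
o\`u $\beta_{K} \in H^2(g_{K}, \k(\P^1)^*)$ d\'esigne l'image de
$\beta$, c'est-\`a-dire la classe de Brauer de la fibre g\'en\'erique
de $f_{K}: X_{K} \to \P^1_{K}$. Ceci signifie exactement que $f_{K}$
a mauvaise r\'eduction au point $N$.

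Le seul point qui demandera une justification soign\'ee est
l'identification de la composante $N$ de la fl\`eche verticale :
c'est la compatibilit\'e des fl\`eches r\'esidus en cohomologie
galoisienne avec le changement de base plat $\P^1_{K} \to \P^1_{k}$,
appliqu\'ee \`a la valuation discr\`ete associ\'ee \`a $N$ sur
$K(\P^1)$. Une fois cette compatibilit\'e acquise, tout le reste
est purement formel et ne fait intervenir que le fait trivial qu'un
automorphisme de corps envoie un \'el\'ement non nul sur un \'el\'ement
non nul.
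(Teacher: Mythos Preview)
Your proof is correct and follows essentially the same approach as the paper: both use the commutative diagram of residues stated just before the lemma, applied with $L=K=k(M)$, to transport the nonzero residue $\partial_M$ to a nonzero residue $\partial_N$. Your version is in fact slightly more careful than the paper's, since you observe that the identification $k(M)\hookrightarrow k(N)=K$ may be a nontrivial $k$-automorphism of $K$ rather than the identity (the paper simply writes $\partial_N(A_K)=\gamma$), but this extra precision changes nothing in the argument.
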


\begin{proof}
Soit  $A/k(\P^1)$ l'alg\`ebre de quaternions associ\'ee \`a la fibre g\'en\'erique de  $f : X \to \P^1_{k}$.
La fibration $f$  a mauvaise r\'eduction en  $M$ si et seulement si le r\'esidu  $\gamma:=\partial_{M}(A) \in H^1(k(M),\Z/2)=H^1(K,\Z/2)$
est non trivial. D'apr\`es la compatibilit\'e ci-dessus,
on a
  $$\partial_{N}(A_{K})=\gamma \in H^1(K(N),\Z/2)=H^1(K,\Z/2).$$
ce qui \'etablit le lemme.
\end{proof} 

L'\'enonc\'e suivant fut \'etabli par Iskovskikh (\cite[Thm. 2]{Isk70}, \cite[Thm. 4, Thm. 5]{Isk79}).

 \begin{prop}\label{auplus3}
 Soient $k$ un corps et
 $X/k$ une surface projective, lisse, g\'eom\'etriquement connexe
sur $k$, munie d'une structure de fibr\'e en coniques relativement minimale
  $X \to \P^1_{k}$. Si  le nombre $r$ de fibres g\'eom\'etriques d\'eg\'en\'er\'ees
est au plus \'egal \`a 3, et si $X$ poss\`ede un point $k$-rationnel, alors $X$ est une surface $k$-rationnelle.
 \end{prop}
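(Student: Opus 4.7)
Le plan est de produire une section $k$-rationnelle de $f$, \'eventuellement apr\`es transformations birationnelles: une fois une telle section obtenue, la fibre g\'en\'erique de $f$ admet un $k(\P^1)$-point, donc est $k(\P^1)$-isomorphe \`a $\P^1_{k(\P^1)}$, d'o\`u $X$ est $k$-birationnelle \`a $\P^1_{k} \times_{k} \P^1_{k}$, donc $k$-rationnelle. Soit $A \in \Br(k(\P^1))$ l'alg\`ebre de quaternions associ\'ee \`a la fibre g\'en\'erique de $f$, dont les r\'esidus non triviaux $\chi_{i} \in H^1(k(M_{i}), \Z/2)$ d\'ecrivent les points ferm\'es $M_{i}$ de mauvaise r\'eduction, avec $\sum_{i} [k(M_{i}):k] = r$.

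La premi\`ere \'etape utiliserait la loi de r\'eciprocit\'e issue de la suite exacte de Faddeev, $\sum_{i} \Cores_{k(M_{i})/k}(\chi_{i}) = 0$ dans $H^1(k,\Z/2)$, pour exclure le cas $r=1$: un unique point de mauvaise r\'eduction serait n\'ecessairement de degr\'e $1$ sur $k$, et son r\'esidu $\chi_{1}$ devrait \^etre nul, ce qui contredit l'hypoth\`ese de mauvaise r\'eduction. Ainsi $r \in \{0,2,3\}$.

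Pour $r=0$, l'alg\`ebre $A$ est non ramifi\'ee sur $\P^1_{k}$ et provient donc de $\Br(k)$ par Faddeev. La projection $x_{0}=f(P) \in \P^1(k)$ d'un $k$-point $P \in X(k)$ donne une fibre lisse $X_{x_{0}}$ poss\'edant $P$ comme $k$-point, d'o\`u $X_{x_{0}} \simeq \P^1_{k}$ et $A(x_{0})=0$ dans $\Br(k)$. Comme $A$ est constante sur $\P^1_{k}$, on en d\'eduit $A=0$, et la fibre g\'en\'erique admet une section.

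Pour $r \in \{2,3\}$, j'effectuerais une analyse cas par cas selon la structure galoisienne des orbites des $M_{i}$, en combinant la loi de r\'eciprocit\'e et l'annulation fournie par le $k$-point. Les transformations \'el\'ementaires de fibr\'es en coniques dues \`a Iskovskikh --- \'eclatement d'un $k$-point situ\'e sur une fibre lisse, suivi de la contraction du transform\'e strict de cette fibre --- modifient la fibration sans changer $r$ et permettent d'ajuster la position du $k$-point afin de faire appara\^itre une section. L'obstacle principal serait le cas o\`u le seul $k$-point disponible est le point singulier d'une fibre d\'eg\'en\'er\'ee dont les composantes sont \'echang\'ees par Galois, notamment pour $r=3$ avec un point ferm\'e de degr\'e $3$ sur $k$; une \'etude fine de l'action galoisienne sur $\Pic(\X)$ et des liens birationnels disponibles serait alors n\'ecessaire pour identifier les courbes exceptionnelles $k$-rationnelles dont la contraction diminue le nombre de fibres d\'eg\'en\'er\'ees et permet ainsi de conclure.
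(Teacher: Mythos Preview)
The paper does not supply its own proof of this proposition: it records the statement as due to Iskovskikh and refers to \cite[Thm.~2]{Isk70} and \cite[Thm.~4, Thm.~5]{Isk79}. Your proposal is therefore not being compared to an in-text argument but to a citation.

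That said, your sketch is uneven. The case $r=0$ (Faddeev's exact sequence shows the Brauer class $A$ is constant from $\Br(k)$, and specialisation at the image of a $k$-point kills it) and the exclusion of $r=1$ via the reciprocity relation $\sum_i \Cores_{k(M_i)/k}(\chi_i)=0$ are both correct and clean. For $r\in\{2,3\}$, however, what you present is only a plan, with the hardest configurations explicitly left open (``une \'etude fine\ldots\ serait alors n\'ecessaire''). Two points on this part. First, elementary transformations preserve $r$, as you yourself note, so by themselves they cannot manufacture a section when $r>0$; the passage from ``ajuster la position du $k$-point'' to ``faire appara\^itre une section'' is the whole difficulty and is not bridged. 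Second, the birational links you invoke at the end --- contracting Galois-stable exceptional configurations to decrease $r$ --- are precisely the nontrivial content of Iskovskikh's theorems; alluding to them without carrying out the analysis amounts to assuming the result. If you wish to make this self-contained, the standard route is to use $K_X^2=8-r\geq 5$ together with Iskovskikh's analysis showing that for $1\leq r\leq 3$ one can always perform a $k$-contraction leaving the conic-bundle category, eventually reaching a del Pezzo surface of degree $\geq 5$, which with a $k$-point is $k$-rational (the paper's Proposition~\ref{granddp}). Short of reproducing that argument, citing Iskovskikh as the paper does is the appropriate move.
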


\begin{prop}\label{fibreconiques} 
Soient $k$ un corps et
 $X/k$ une surface projective, lisse, g\'eom\'etriquement connexe
sur $k$, munie d'une structure de fibr\'e en coniques relativement minimale
  $X \to \P^1_{k}$. Supposons $X$  d\'eploy\'ee sur une extension 
  cyclique de corps $K/k$. Si le nombre $r$ de fibres g\'eom\'etriques d\'eg\'en\'er\'ees de
  la fibration est au moins \'egal \`a 4, 
  alors il existe
  une extension finie s\'eparable $k'/k$ telle que $H^1(k',\Pic(\ovX)) \neq 0$.
\end{prop}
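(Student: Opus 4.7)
The plan is to exhibit a subgroup $H \leq G := \Gal(K/k)$ with $H^1(H, \Pic(\X)) \neq 0$ and take $k' := K^H$; since the Galois action on $\Pic(\X)$ factors through $G$, this yields $H^1(k', \Pic(\X)) = H^1(H, \Pic(\X))$. By relative minimality of $f: X \to \P^1_k$, the quadratic extension $L_M/k(M) \subset K$ is non-trivial at each bad closed point $M$; since $K/k(M)$ is cyclic, $|G_M| := [K:k(M)]$ must be even, whence $|G|$ is even. Let $\tau \in G$ be the unique element of order $2$, and $H_M := \Gal(K/L_M)$ the index-$2$ subgroup of $G_M$. The cyclicity of $G$ forces $\tau \in G_M$ for every $M$, and $\tau$ swaps the two components of the geometric fibers above the $G$-orbit of $M$ precisely when $\tau \notin H_M$.

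For the key cohomological computation, consider the Galois submodule $V \subset \Pic(\X)$ of rank $r+1$ generated by the fiber class $F$ and the components $C_i^\pm$ of the $r$ bad geometric fibers, subject to the relations $C_i^+ + C_i^- = F$. Any two sections of $f$ differ by a vertical divisor lying in $V$, so the quotient $\Pic(\X)/V \cong \Z$ carries trivial Galois action. Suppose $H = \langle \sigma \rangle \leq G$ has order $2$ with $\sigma$ fixing every geometric bad point and swapping the two components in exactly $r'$ of the $r$ bad geometric fibers. A direct computation of the norm $1 + \sigma$ and the coboundary $\sigma - 1$ on the generators of $V$ then yields $H^1(H, V) \simeq (\Z/2)^{r'-1}$ for $r' \geq 1$. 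Combining with the long exact sequence of $0 \to V \to \Pic(\X) \to \Z \to 0$, and using that the connecting homomorphism $\Z \to H^1(H, V)$ has cyclic image of order at most $2$, one obtains $|H^1(H, \Pic(\X))| \geq 2^{r'-2}$, which exceeds $1$ as soon as $r' \geq 3$.

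I apply this first with $\sigma := \tau$. The number of $\tau$-swap geometric bad fibers is $r'(\tau) = \sum_{M:\, \tau \notin H_M} |G/G_M|$; if $r'(\tau) \geq 3$, then $k' := K^{\langle \tau \rangle}$ satisfies the conclusion. Otherwise, one is in the case $\tau \in H_M$ for most $M$. Here I proceed by a case analysis based on the orbit structure of $G$ on the bad closed points and the values of the quadratic characters $\eta_M : G_M \to \{\pm 1\}$ (with kernel $H_M$), using Lemma~\ref{utile} to track how the bad reduction data pulls back to intermediate subfields of $K$ and possibly selecting a different subgroup $H \leq G$ (with an appropriate variant of the above computation on the corresponding sub-quotient of $\Pic(\X)$).

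The main obstacle I anticipate is this final case analysis: enumerating the configurations for which $r'(\tau) \leq 2$ and verifying that some alternative choice of $H$ yields non-zero $H^1$ in each. The cyclic structure of $G$ (whose subgroup lattice coincides with the divisor lattice of $|G|$) together with the hypothesis $r \geq 4$ keep this enumeration finite and tractable. The hypothesis $r \geq 4$ is indispensable, since for $r \leq 3$ the surface $X$ is already $k$-rational by Proposition~\ref{auplus3}.
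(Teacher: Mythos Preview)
Your module $V$ is exactly the paper's module $M$ (the kernel of the restriction map $\Pic(\X)\to\Z$), and your short exact sequence $0\to V\to\Pic(\X)\to\Z\to 0$ is the paper's. Your computation $H^1(\langle\sigma\rangle,V)\simeq(\Z/2)^{r'-1}$ and the bound $|H^1(\langle\sigma\rangle,\Pic(\X))|\ge 2^{r'-2}$ are correct; in fact they are the specialisation to $k'=K^{\langle\tau\rangle}$ of the paper's general formula
\[
H^1(k',M)\;=\;\Ker\Bigl[\bigoplus_i \Z/2 \to H^1(k',\Z/2)\Bigr],
\]
obtained from the permutation resolution $0\to P\to \Z\oplus Q\to M\to 0$.

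The genuine gap is the case $r'(\tau)\le 2$, which you defer but which is not a residual detail. Take $G=\Z/4$ with four $k$-rational bad points: the unique quadratic subextension of $K/k$ is $L_M$ for every $M$, so $H_M=\langle\tau\rangle$, hence $\tau\in H_M$ for all $M$ and $r'(\tau)=0$. Your order-$2$ computation gives nothing, whereas $H^1(G,\Pic(\X))$ is nontrivial. More generally $r'(\tau)=0$ whenever $4\mid |G_M|$ for every $M$, so restricting to $H=\langle\tau\rangle$ is structurally insufficient, and your fallback (``a different subgroup $H$, with an appropriate variant of the computation'') is where the whole argument actually lives. Your claim that the cyclic subgroup lattice keeps the enumeration finite is not justified: $|G|$ is unbounded.

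The paper does not privilege any particular $H$. Using the formula above (valid over any base), it \emph{base-changes} until the right-hand side is visibly large: first to an odd-degree extension so that every bad closed point has degree $1$ or even, then, via Lemma~\ref{utile} applied to a bad point of degree $\ge 4$ or to two bad points of degree $2$, to a field $k'$ over which there are at least four $k'$-rational bad points. The cyclic hypothesis forces all their quadratic classes to coincide, so the map $\bigoplus_i\Z/2\to H^1(k',\Z/2)$ factors through the sum $(\Z/2)^4\to\Z/2$, the kernel has order $\ge 8$, and $H^1(k',\Pic(\X))$ has order $\ge 4$. Only the two leftover degree patterns $(2,1,1,1)$ and $(2,1,1)$ require a short direct check (the latter is shown not to occur, using reciprocity). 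If you want to complete your approach, this base-change reduction is exactly the missing ingredient.
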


\begin{proof}
 Donnons ici quelques rappels sur le module de Picard
d'une surface fibr\'ee en coniques sur $\P^1_{k}$. Pour
plus de d\'etails, je renvoie le lecteur \`a \cite[\S 2]{CTSDMJ81}.

Soit comme ci-dessus $\k$ une cl\^{o}ture s\'eparable de $k$ et $\ovX=X \times_{k}\k$.
On a une suite exacte de modules galoisiens
$$ 0 \to P \to \Z.f \oplus Q \to \Pic(\X) \to \Z \to 0,$$
o\`u  $P$ est le module de permutation sur les  $\k$-points of $\P^1$
\`a fibre singuli\`ere, $Q$ est le module de permutation sur  
les composantes des fibres singuli\`eres sur $\k$, et $\Z.f$ est engendr\'e par une fibre
au-dessus d'un $k$-point de $\P^1_{k}$. L'application $\Pic(\X) \to \Z $ 
est induite par la restriction \`a la fibre g\'en\'erique.
 
Soit  $M$ le noyau de cette application restriction. On a des suites exactes
courtes de modules galoisiens
$$ 0 \to P \to \Z \oplus Q \to M \to 0$$
et
$$0 \to M \to \Pic(\X) \to \Z \to 0.$$
Par cohomologie galoisienne on obtient des suites exactes
$$ 0 \to \Z/2 \to H^1(k,M) \to H^1(k, \Pic(\X)) \to 0$$
et
$$0 \to H^1(k,M) \to H^2(k,P) \to H^2(k,  \Z \oplus Q).$$
Cette derni\`ere donne naissance \`a une suite exacte
$$0 \to H^1(k,M) \to \oplus_{i=1}^r \Z/2 \to H^1(k,\Z/2),$$
o\`u  $i$ parcourt les $r\geq 1$   points ferm\'es $P_{i}$ de $\P^1$ \`a fibre singuli\`ere, 
d\'eploy\'ee par une extension quadratique s\'eparable  de corps,  de classe $a_{i} \in H^1(k(P_{i}),\Z/2)$,
et l'application $\theta_{i} : \Z/2 \to H^1(k,\Z/2)$ envoie $1$
sur la norme (de $k(P_{i})$  \`a $k$) de   $a_{i} \in H^1(k(P_{i}),\Z/2)$.  

 On a en outre une relation de r\'eciprocit\'e \cite[\S 2, Remark]{CTSDMJ81} 
 qui implique ici que l'image de l'\'el\'ement $(1,\dots,1) \in \oplus_{i} H^1(k(P_{i}),\Z/2)$ 
 est la classe triviale dans $H^1(k,\Z/2)$.

Nous voulons montrer : si le nombre de fibres g\'eom\'etriques d\'eg\'en\'er\'ees de $X \to \P^1_{k}$
est sup\'erieur ou \'egal \`a 4, alors 
 il existe  une extension finie s\'eparable $k'/k$ telle que $H^1(k',\Pic(\ovX)) \neq 0$.

Si $E/k$ est une extension s\'eparable de corps de degr\'e impair, 
par passage \`a $E$, la famille $X_{E} \to \P^1_{E}$ reste relativement minimale :
aucun r\'esidu n'est annul\'e. On peut donc supposer 
que tous les points ferm\'es de mauvaise r\'eduction sont soit de degr\'e 1 soit de degr\'e pair.

Supposons qu'il existe un point ferm\'e de mauvaise r\'eduction $P$ de degr\'e pair au moins \'egal \`a 4.
Par le lemme \ref{utile}, on se ram\`ene apr\`es extension de $k$ \`a $k(P)$
\`a la situation o\`u il y a au moins 4 points rationnels $P_{1},P_{2},P_{3},P_{4}$
\`a fibre singuli\`ere. Comme la surface est par hypoth\`ese d\'eploy\'ee par une extension cyclique,
les classes $a_{i} \in H^1(k(P_{i}),\Z/2)=H^1(k,\Z/2)$, qui sont non triviales, co\"{\i}ncident toutes
avec une m\^eme classe non triviale $a \in H^1(k,\Z/2)$.

L'application $\oplus_{i=1}^r \Z/2 \to H^1(k,\Z/2)$ induit une
application $(\Z/2)^4 \to H^1(k, \Z/2)$  
qui se factorise donc par $(\Z/2)^4 \to \Z/2$.
Le groupe $H^1(k,M)= Ker [\oplus_{i=1}^r \Z/2 \to H^1(k,\Z/2)]$
  contient donc au moins   le noyau d'une
application   $(\Z/2)^4 \to \Z/2$, il est donc d'ordre au moins 8,
et $H^1(k, \Pic(\ovX))$ est donc d'ordre au moins 4.

Supposons d\'esormais que les points ferm\'es $P_{i}$ de mauvaise r\'eduction
sont tous de degr\'e 2 ou 1. 

S'il y a au moins 4 points ferm\'es $P_{i}$ de mauvaise r\'eduction
de degr\'e 1 sur $k$, l'argument ci-dessus permet de conclure.

Supposons qu'il y a au moins deux points ferm\'es $P_{1}, P_{2}$ de degr\'e 2. 
Comme la surface est par hypoth\`ese 
d\'eploy\'ee sur une  extension cyclique, ceci impose que les extensions
$k(P_{i})/k$ co\"{\i}ncident en ces deux points. 
Soit donc $L/k$ l'extension quadratique  s\'eparable de corps ainsi d\'efinie.
En appliquant le lemme \ref{utile}, on se ram\`ene apr\`es extension de $k$ \`a $L$ au cas 
o\`u la fibration $X \to \P^1_{k}$ a des fibres singuli\`eres  au-dessus d'au moins 4
  points $k$-rationnels, cas qui a d\'ej\`a \'et\'e r\'egl\'e.

On peut donc supposer que l'ensemble des degr\'es des points ferm\'es \`a
mauvaise r\'eduction est soit $(2,1,1,1)$ soit $(2,1,1)$.
  
  Consid\'erons le cas $(2,1,1,1)$.
 Comme la surface est par hypoth\`ese 
d\'eploy\'ee sur une  extension cyclique, ceci impose que les extensions quadratiques
associ\'ees \`a
$a_{i} \in H^1(k(P_{i}), \Z/2)$ pour $P_{i}$ $k$-rationnel  co\"{\i}ncident
avec une m\^eme classe $a \in H^1(k,\Z/2)$. Soit $R$ le point ferm\'e de degr\'e 2,
et soit $b \in H^1(k(R),\Z/2)$ le r\'esidu en ce point.
L'application
$\oplus_{i=1}^3 \Z/2  \oplus \Z/2  \to H^1(k,\Z/2)$
envoie $(x,y,z,t)$ sur $(x+y+z).a+ t.Norm_{k(R)/k}(b)\in H^1(k,\Z/2)$.
On sait que la classe $(1,1,1,1)$ a une image triviale.
Donc $3a+Norm_{k(R)/k}(b)$ est trivial dans $H^1(k,\Z/2)$.
Ceci implique $a=Norm_{k(R)/k}(b)$, et cet \'el\'ement est non trivial dans $H^1(k,\Z/2)$.
L'application $\oplus_{i=1}^3 \Z/2  \oplus \Z/2  \to H^1(k,\Z/2)$
envoie donc $(x,y,z,t)$ sur $(x+y+z+t).a $ dans $H^1(k,\Z/2)$.
C'est donc simplement la somme
$(\Z/2)^4 \to \Z/2$. Son noyau est $(\Z/2)^3$, on a donc
$H^1(k, \Pic(\ovX)) =(\Z/2)^2$.

Montrons pour finir  que le cas $(2,1,1)$ n'existe pas.
Comme ci-dessus, les
extensions quadratiques
associ\'ees \`a
$a_{i} \in H^1(k(P_{i}), \Z/2)$ pour $P_{i}$ $k$-rationnel  co\"{\i}ncident
avec une m\^eme classe non triviale $a \in H^1(k,\Z/2)$. Soit $Q$ le point ferm\'e de degr\'e 2,
et soit $b \in H^1(k(Q),\Z/2)$ le r\'esidu en ce point. Ceci correspond \`a une
extension quadratique s\'eparable $L/k(Q)$. Sous nos 
hypoth\`eses, l'extension $L/k$ est cyclique de groupe de Galois $\Z/4$. Sous cette 
hypoth\`ese, on v\'erifie
 que la norme $H^1(k(Q), \Z/2) \to H^1(k,\Z/2)$
envoie la classe $b$ sur la classe $a$.
Par ailleurs on sait (r\'eciprocit\'e) que la classe $(1,1,1)$ a une image triviale
par l'application  
$\oplus_{i=1}^2 \Z/2  \oplus \Z/2  \to H^1(k,\Z/2)$.
Mais ceci dit que $3a =a \in H^1(k,\Z/2)$ est nul. Contradiction.
 \end{proof}

\section{Surfaces de del Pezzo}

On a l'\'enonc\'e connu (Ch\^{a}telet, Manin, Iskovskikh, voir \cite[Thm. 2.1]{VA}):
 
\begin{prop}\label{granddp}
Soit $X$ une surface de del Pezzo de degr\'e $d \geq 5$ sur un corps $k$.
Si $X$ poss\`ede un point $k$-rationnel, alors $X$ est $k$-rationnelle.
\end{prop}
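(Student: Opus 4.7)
Le plan est de traiter s\'epar\'ement chacun des cinq degr\'es $d \in \{5,6,7,8,9\}$, en exploitant la description explicite de l'action de Galois sur la configuration des $(-1)$-courbes de $\ovX$ pour se ramener de proche en proche au th\'eor\`eme de Ch\^atelet en degr\'e $9$, selon lequel une surface de Severi-Brauer de dimension $2$ poss\'edant un $k$-point est $k$-isomorphe \`a $\P^2_k$.

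Pour $d=8$, on distinguera selon la forme de $\ovX$ sur $\ovk$. Si $\ovX \simeq \F_{1}$, l'unique $(-1)$-courbe est Galois-fixe, et sa contraction produira une surface de Severi-Brauer munie du $k$-point image, donc $\P^2_k$ ; ainsi $X$ sera l'\'eclat\'ee de $\P^2_k$ en un $k$-point. Si $\ovX \simeq \P^1 \times \P^1$, on \'eclatera le $k$-point donn\'e $P$ puis on contractera la paire Galois-stable form\'ee des transform\'ees strictes des deux g\'en\'eratrices de $\ovX$ passant par $P$, pour retomber \`a nouveau sur $\P^2_k$. Les cas $d=7$ et $d=6$ se ram\`eneront au pr\'ec\'edent par contraction Galois-stable : en degr\'e $7$, la courbe centrale de la cha\^ine des trois $(-1)$-courbes de $\ovX$ est Galois-fixe, et sa contraction donne une del Pezzo de degr\'e $8$ avec $k$-point ; en degr\'e $6$, l'hexagone des six $(-1)$-courbes admet une paire de sommets oppos\'es Galois-stable (correspondant \`a l'une des structures de fibr\'e en coniques sur $X$ d\'efinies sur $k$), dont la contraction m\`ene au degr\'e $8$.

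La pi\`ece principale sera le cas $d=5$. L'id\'ee classique est que par un $k$-point $P$ de $X$ en position g\'en\'erale (en dehors des dix $(-1)$-courbes de $\ovX$) passe un pinceau de coniques trac\'ees sur $X$ ; cela d\'efinit un fibr\'e en coniques $X \dashrightarrow \P^1_k$ pour lequel la fibre passant par $P$ fournit une section rationnelle, et l'on en d\'eduit une \'equivalence $k$-birationnelle entre $X$ et $\P^1_k \times \P^1_k$. L'obstacle principal sera d'assurer l'existence d'un tel $k$-point en position g\'en\'erale : si le point donn\'e se trouve sur une $(-1)$-courbe, on le remplacera par un autre obtenu via la $k$-unirationalit\'e de $X$, elle-m\^eme cons\'equence de l'existence d'un $k$-point sur cette surface.
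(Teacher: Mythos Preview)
Le papier ne d\'emontre pas cette proposition : il la pr\'esente comme un r\'esultat classique d\^u \`a Ch\^atelet, Manin, Iskovskikh, et renvoie \`a \cite[Thm.~2.1]{VA}. Votre esquisse vise donc \`a reconstituer une preuve que le papier omet. Les cas $d=9,8,7$ sont corrects.

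En revanche, votre argument pour $d=6$ est faux tel quel. Le groupe de Weyl $W(A_1\times A_2)\simeq S_3\times \Z/2$ agit transitivement sur les trois paires de sommets oppos\'es de l'hexagone (dans le mod\`ele $\ovX=\mathrm{Bl}_{p_1,p_2,p_3}\P^2$, ce sont les paires $(E_i,L_{jk})$; le facteur $S_3$ permute les indices et $\Z/2$ est l'involution de Cremona). Lorsque l'image galoisienne est tout $S_3\times\Z/2$, aucune paire n'est stable et aucun fibr\'e en coniques n'est d\'efini sur $k$. L'argument correct passe par la description de $X$ comme compactification lisse d'un torseur sous un $k$-tore de dimension~2 : un $k$-point trivialise ce torseur, d'o\`u la $k$-rationalit\'e.

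Pour $d=5$, votre ``pinceau de coniques passant par $P$'' n'est pas bien d\'efini sur $k$ : il y a cinq structures de fibr\'e en coniques sur $\ovX$, et $W(A_4)\simeq S_5$ peut les permuter transitivement. De plus, la phrase ``la fibre passant par $P$ fournit une section rationnelle'' confond fibre et section. L'argument classique (Enriques, Swinnerton-Dyer) proc\`ede autrement : on \'eclate un $k$-point $P$ g\'en\'eral pour obtenir une del Pezzo $Y$ de degr\'e~4; sur $\ovY$, les cinq $(-1)$-courbes rencontrant la courbe exceptionnelle $E$ forment un ensemble Galois-stable de courbes deux \`a deux disjointes, dont la contraction donne une surface de Severi--Brauer contenant l'image de $E\simeq\P^1_k$, donc munie d'un $k$-point, donc isomorphe \`a $\P^2_k$. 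Enfin, l'existence d'un $k$-point hors des dix droites demande elle aussi un argument (par exemple via le fait, d\^u \`a Enriques et Swinnerton-Dyer, que toute del Pezzo de degr\'e~5 poss\`ede un $k$-point, combin\'e \`a une analyse des points situ\'es sur les droites); invoquer la $k$-unirationalit\'e est circulaire puisque c'est pr\'ecis\'ement ce que l'on cherche \`a \'etablir.
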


\begin{prop}\label{dp4}
Soient $k$ un corps et  $X \subset \P^4_{k}$ une surface de del Pezzo de degr\'e $4$ sur $k$ 
 d\'eploy\'ee par une extension cyclique $K/k$.
Supposons que $X$ est $k$-minimale. Alors :

 (i) Il existe une extension de corps $E/k$ telle que $H^1(E, \Pic(\X)) \neq 0$.
 
 (ii) Le module galoisien $\Pic(\X)$ n'est pas facteur direct d'un module de permutation.
 
 (iii) Si $X$ poss\`ede un $k$-point, 
il existe  une extension finie s\'eparable
 $k'/k$ telle que $H^1(k', \Pic(\X)) \neq 0$.

  \end{prop}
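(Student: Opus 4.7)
Le plan est le suivant. Posons $G = \Gal(K/k)$, groupe cyclique par hypoth\`ese, et rappelons que la $k$-minimalit\'e de $X$ \'equivaut \`a ce que le rang de $\Pic(\X)^G$ soit \'egal \`a $1$, le g\'en\'erateur \'etant la classe anticanonique $-K_{X}$. Sur une base convenable, l'action de $G$ sur le r\'eseau $\Pic(\X) \simeq \Z^6$ se r\'ealise dans le groupe de Weyl $W(D_{5})$, groupe de sym\'etries des 16 courbes exceptionnelles d'une surface de del Pezzo de degr\'e $4$.

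J'observerais d'abord que les assertions (i) et (iii) se ram\`enent \`a un m\^eme \'enonc\'e cohomologique. Pour tout sous-groupe $H \subset G$, le corps fixe $K^H$ est une extension finie s\'eparable de $k$, et comme l'action galoisienne se factorise par $G$, on a l'identification $H^1(K^H,\Pic(\X)) = H^1(H,\Pic(\X))$. Il suffit donc d'\'etablir l'\'enonc\'e suivant : pour tout sous-groupe cyclique $G \subset W(D_{5})$ agissant minimalement sur $\Pic(\X)$, il existe un sous-groupe $H \subset G$ tel que $H^1(H,\Pic(\X)) \neq 0$. L'assertion (ii) en r\'esulte imm\'ediatement : si $\Pic(\X)$ \'etait facteur direct d'un module de permutation $P$, alors pour tout sous-groupe $H$, $H^1(H,\Pic(\X))$ serait facteur direct de $H^1(H,P)=0$, en contradiction avec (i). L'hypoth\`ese $X(k)\neq\emptyset$ dans (iii) ne semble pas intervenir dans la construction de $k'$; elle est vraisemblablement pos\'ee en vue d'applications ult\'erieures via la suite exacte du th\'eor\`eme \ref{implgen} reliant $H^1$ au groupe de Brauer.

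L'\'etape principale consiste donc en une analyse cas par cas. Les classes de conjugaison de sous-groupes cycliques de $W(D_{5})$ agissant minimalement sur $\Pic(\X)$ sont en nombre fini et r\'epertori\'ees dans la litt\'erature classique sur les surfaces de del Pezzo minimales (Manin, Iskovskikh). Pour chaque classe, j'expliciterais l'action sur une $\Z$-base de $\Pic(\X)$, d\'ecomposerais (\`a indice fini pr\`es) le $G$-module en somme de modules standards sur un groupe cyclique (tels que $\Z$, $\Z[\zeta_{n}]$, id\'eaux d'augmentation), puis calculerais $H^1(H,\Pic(\X))$ pour chaque sous-groupe $H$. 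C'est la m\^eme m\'ethode que celle employ\'ee au \S 3 pour les degr\'es $3, 2, 1$ via les tables de la litt\'erature; pour le degr\'e $4$, le rang $6$ rend le calcul direct abordable.

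Le principal obstacle est donc le travail d'\'enum\'eration et de v\'erification : il s'agit de s'assurer qu'aucune classe de conjugaison minimale cyclique ne donne un $G$-module \`a cohomologie universellement triviale sur les sous-groupes. Un contr\^ole alternatif, qui permettrait d'\'eviter certains cas, consisterait \`a invoquer la proposition \ref{fibreconiques} du \S 2 lorsqu'on peut munir $X$ d'une structure de fibr\'e en coniques \`a au moins $4$ fibres d\'eg\'en\'er\'ees, les cas restants se traitant par un calcul direct.
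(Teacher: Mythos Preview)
Your plan is correct in principle and would succeed, but it is \emph{not} the route taken in the paper. The paper does not enumerate the minimal cyclic classes in $W(D_5)$; instead it proceeds geometrically and establishes (iii) first. Assuming $X(k)\neq\emptyset$, one picks a $k$-point $P$ off the sixteen lines (this exists if $|k|\geq 23$; for smaller finite $k$ one first passes to an extension linearly disjoint from $K$). Blowing up $P$ yields a smooth cubic $Y$ carrying a conic bundle $Y\to\P^1_k$ with five degenerate geometric fibres, still split by a cyclic extension. Passing to a relatively minimal model $Z\to\P^1_k$, the number $r$ of bad fibres must satisfy $r\geq 4$, since $r\leq 3$ would force $k$-rationality (Proposition~\ref{auplus3}) and contradict Iskovskikh's theorem that a $k$-minimal degree~$4$ del Pezzo with a $k$-point is not $k$-rational. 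Proposition~\ref{fibreconiques} then gives $k'/k$ with $H^1(k',\Pic(\overline{Z}))\neq 0$, hence $H^1(k',\Pic(\overline{X}))\neq 0$ since $\Pic(\overline{X})$ and $\Pic(\overline{Z})$ differ by permutation modules. Having (iii), the paper deduces (i) by the \emph{passage to the generic point}: over $F=k(X)$ the surface $X_F$ is still $F$-minimal, split by the cyclic extension $KF/F$, has an $F$-point, and $F$ is infinite, so (iii) applies over $F$; the Galois module $\Pic(\overline{X})$ is unchanged.

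So the ``contr\^ole alternatif'' you mention at the end is in fact the paper's \emph{main} argument. Your approach has the conceptual advantage that it treats (i) and (iii) uniformly and makes the hypothesis $X(k)\neq\emptyset$ manifestly irrelevant for the cohomological conclusion --- as you rightly suspected. The paper's approach, by contrast, avoids all tablework in degree~$4$ by recycling the conic-bundle case, at the cost of needing a rational point (whence the generic-point trick for (i), and the slight contortion over small finite fields for (iii)).
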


  \begin{proof}
  Soit $X$ une surface de del Pezzo de degr\'e 4 sur un corps $k$, poss\'edant un $k$-point,
  d\'eploy\'ee sur une extension cyclique de $k$.
 Supposons $X$ $k$-minimale.  D'apr\`es \cite[Thm. 2]{Isk72}, la surface $X$ n'est pas $k$-rationnelle.
 
 Si $X$ poss\`ede un $k$-point $P$ non situ\'e sur les 16 droites
 (g\'eom\'etriques) exceptionnelles, en \'eclatant le point $P$ on obtient une surface cubique lisse $Y$ sur $k$
 \'equip\'ee d'une fibration en coniques sur $Y \to \P^1_{k}$, avec 5 fibres g\'eom\'etriques d\'eg\'en\'er\'ees,
 d\'eploy\'ee sur une extension cyclique de $k$.  Si cette fibration n'est pas relativement minimale, soit
 $Z \to \P^1_{k}$  un mod\`ele relativement minimal. Soit $r$ le nombre de fibres g\'eom\'etriques
 d\'eg\'en\'er\'ees. Si $r \leq 3$, alors d'apr\`es la Proposition \ref{auplus3}, $Y$ est $k$-rationnelle,
 ce qui est exclu. Ainsi on a $r\geq 4$. 
 La Proposition \ref{fibreconiques}  donne alors l'existence
 d'une extension finie s\'eparable $k'/k$ telle que  $H^1(k', \Pic(\overline{Z})) \neq 0$. 
 Comme on passe de $X$ \`a $Z$ par des s\'eries d'\'eclatements en des points ferm\'es
 s\'eparables, 
 les modules galoisiens $\Pic(\overline{X})$ et $\Pic(\overline{Z})$
sont isomorphes \`a addition de modules de permutation pr\`es.
On a donc  $H^1(k', \Pic(\overline{X})) \neq 0$.

Si $X$ poss\`ede un $k$-point, et le corps $k$ poss\`ede au moins 23 \'el\'ements, alors il existe
 un $k$-point sur $X$ hors des 16 droites \cite[Chap. 4, \S 8, Teor. 8.1= Thm. 30.1]{M}. 
 Supposons que $k$ est fini et $X$ est d\'eploy\'ee sur l'extension cyclique $K/k$.
 Il existe une extension finie $L/k$ lin\'eairement disjointe de $K$ sur laquelle $X$ poss\`ede un 
 point $L$-rationnel non situ\'e hors des 16 droites. La $L$-surface de del Pezzo de degr\'e 4 $X\times_{k}L$
 est $L$-minimale et d\'eploy\'ee par l'extension cyclique $K.L/L$. L'argument ci-dessus donne alors une
 extension finie $k'$ de $L$ telle que $H^1(k', \Pic(\overline{X})) \neq 0$.
 
 Ceci \'etablit le point (iii). 
 
  Pour \'etablir (i), on utilise l'astuce du passage au point g\'en\'erique (voir 
 \cite[Thm. 2.B.1]{CTSDesc}). Soit $F=k(X)$ le corps des fonctions de $X$. 
 La $F$-vari\'et\'e $X_{F}=X\times_{k}F$ poss\`ede un $F$-point.
 Elle est $F$-minimale, car $k$ est alg\'ebriquement clos dans $F=k(X)$.
 Le module galoisien $\Pic(\overline{X})$ ne change pas par passage du corps de base de $k$ \`a $F$,
 il est d\'eploy\'e par l'extension  $k'/k$ comme par l'extension $F'/F$, o\`u $F':=F.K$.
  Comme le corps $F$
 est infini, il existe un $F$-point de $X$ non situ\'e sur les droites (g\'eom\'etriques) de $X$.
 La $F$-surface  minimale  $X_{F}$ est    d\'eploy\'ee par l'extension cyclique $F'/F$.
 Par le point (iii), il existe une extension finie s\'eparable $E/F$ telle que $H^1(E, \Pic(\overline{X})) \neq 0$,
 ce qui donne (i).
 
 Ceci implique que le module galoisien $\Pic(\overline{X}) $ n'est pas facteur direct d'un module de permutation
(Th\'eor\`eme \ref{implgen}), ce qui donne (ii).
 \end{proof}

   On s'int\'eresse maintenant aux  surfaces de del Pezzo de degr\'e 3, 2 et 1 d\'eploy\'ees par une extension cyclique $K/k$
   du corps de base $k$. 
   On note $Frob$ un g\'en\'erateur  du groupe cyclique $\Gal(K/k)$.
Les diverses actions du groupe cyclique $\Gal(K/k)$
sur le groupe $\Pic(X_{K})=\Pic(\ovX)$ ont \'et\'e classifi\'ees par
Frame \cite{Frame},  puis Swinnerton-Dyer \cite{SwD},  corrig\'ees par  Manin \cite[Chapitre IV]{M},   corrig\'ees
et compl\'et\'ees par Urabe \cite{U}  puis r\'ecemment par Banwait, Fit\'e et Loughran \cite{BFL}.

   Dans   \cite[Chap. IV, Table I,  Colonne 5]{M} et dans \cite[Table 7.1, Colonne 5]{BFL}, une surface a
le symbole  $\prod_{m}m^{n_{m}}$, avec tous les $n_{m}\geq 0$,
si  pour $m$ donn\'e 
l'ensemble Galois invariant des
racines primitives $m$-i\`emes de l'unit\'e parmi les valeurs propres de $Frob$
a $n_{m}$ \'el\'ements. En d'autres termes, on d\'ecompose le polyn\^ome caract\'eristique
de $Frob$ agissant sur   $\Pic(\ovX)\otimes_{\Z}{\bf C}$
en regroupant les orbites des racines sous l'action du groupe de Galois.
On appellera ce  symbole  le  symbole  caract\'eristique de $Frob$ (pour son action sur $\Pic(\ovX)\otimes_{\Z}{\bf C}$).

Urabe \cite[Supplement]{U} utilise le symbole de Frame \cite{Frame}.
Le symbole de Frame $\prod_{m}m^{n_{m} }$,  avec $n_{m} \in \Z$,
correspond \`a une r\'e\'ecriture du polyn\^{o}me caract\'eristique de $Frob$
pour son action sur $\Pic(\ovX)\otimes_{\Z}{\bf C}$
comme un produit  $\prod_{m}(t^m-1)^{n_{m}}$,  avec $n_{m} \in \Z$.
 Il y a une unique fa\c con d'\'ecrire  le polyn\^{o}me caract\'eristique comme un tel produit
 (avec des entiers $m>0$ distincts non nuls, et des $n_{m}$ non nuls).
  Soit $r>1$. Pour calculer le symbole de Frame de $Frob^{r}$, 
dans le produit  $\prod_{m}(t^m-1)^{n_{m}}$ attach\'e au symbole de Frame  $\prod_{m}m^{n_{m} }$ de $Frob$,
  pour chaque entier $m$ on \'ecrit 
  $r=uv$ et   $m=uw$ ($u, v, w$ entiers positifs) avec $(v,w)=1$,  et
 on remplace   $(t^m-1)$ 
 par   $(t^w-1)^u$, puis on regroupe les termes.

Dans les tables de \cite{U} et \cite{BFL}, les symboles (d'un type ou de l'autre type)  
associ\'es  \`a des   surfaces diff\'erentes peuvent co\"{\i}ncider
mais c'est rare.

\bigskip
  
 La proposition suivante m'a \'et\'e indiqu\'ee par K. Shramov.
 \begin{prop} (A. Trepalin) \label{dp3}
  Soit  $X$ une surface cubique lisse sur un corps $k$, 
  d\'eploy\'ee par une extension cyclique $K/k$.
Supposons que $X$ est $k$-minimale.
Il existe alors une extension finie s\'eparable de corps
  $k'/k$ telle que $H^1(k', \Pic(\X)) \neq 0.$
\end{prop}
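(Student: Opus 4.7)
The plan is to translate the statement into a combinatorial problem on the $G$-lattice $M=\Pic(\ovX)$, where $G=\Gal(K/k)$ is cyclic, and then to perform a case-by-case verification using the classification tables recalled above. The action of $G$ on $M$ preserves the canonical class $-K_X$ and thus factors through $W(E_6)\subset GL(M)$, the orthogonal complement $K_X^\perp$ being isomorphic to the lattice $E_6$. The $k$-minimality hypothesis is equivalent to the assertion that for any generator $\sigma\in G$ the induced action on $E_6$ has no nonzero fixed vector, i.e.\ that the characteristic polynomial of $\sigma$ restricted to $E_6$ is not divisible by $t-1$. Since $K/k$ is cyclic, a finite separable extension $k'/k$ yields $\Gal(Kk'/k')\simeq H$ for some subgroup $H\leq G$, and conversely every subgroup of $G$ arises from an intermediate field $K\supset k'\supset k$; exhibiting $k'/k$ as in the statement is therefore equivalent to exhibiting $H\leq G$ with $H^1(H,M)\neq 0$.

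Granted this reduction, I would read off from the tables of \cite[Chap.~IV, Table~1]{M}, \cite[Supplement]{U} and \cite[Table~7.1]{BFL} the finite list of conjugacy classes $[\sigma]\subset W(E_6)$ satisfying the minimality condition above, indexed by the characteristic symbol or the Frame symbol. For each such class, with $n=|\sigma|$, the decomposition of the rational action of $\sigma$ into cyclotomic factors is recorded in these tables; and for each divisor $d\mid n$, the group $H^1(\langle \sigma^d\rangle, M)$ can be computed directly from an explicit matrix for $\sigma$ (equivalently, from the kernel and cokernel of $1-\sigma^d$ together with the norm $N_d=1+\sigma^d+\dots+\sigma^{n-d}$ acting on $M$). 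The aim is to check, class by class, that at least one subgroup $\langle \sigma^d\rangle$ of $G$ produces a nonzero $H^1$.

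The main obstacle is therefore the exhaustive verification. In several classes the full group $G$ already yields $H^1(G,M)\neq 0$, but there are classes for which $H^1(G,M)=0$ and one is genuinely forced to pass to a proper subgroup; this is exactly the reason why the classical $k$-nonrationality of a $k$-minimal cubic surface (due to Manin and Iskovskikh) is not a substitute for the proposition, and why the hypothesis that $K/k$ is cyclic is essential, as it guarantees that every subgroup of $G$ corresponds to an admissible intermediate field and that the relevant cohomology admits a direct cyclic-group computation. The resulting finite, explicit check is Trepalin's computation, and it completes the proof.
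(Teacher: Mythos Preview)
Your reduction is correct and your approach is exactly the paper's: one lists, via Table~7.1 of \cite{BFL}, the five conjugacy classes of cyclic subgroups of $W(E_6)$ with index $i(X)=0$, observes that classes~3 and~5 already have $H^1(k,\Pic(\ovX))\neq 0$, and for classes~1,~2,~4 passes to the power $Frob^4$, $Frob^2$, $Frob^3$ respectively, each of which lands in class~3. The only difference is that the paper actually carries out this five-line verification rather than deferring to Trepalin.
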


\begin{proof}

  Nous utilisons ici la table 7.1 de l'article 
 \cite{BFL}.    
Les actions correspondant \`a des surfaces $k$-minimales,
 c'est-\`a-dire  d'indice $i(X)=0$, sont celles num\'erot\'ees 1, 2, 3, 4, 5 dans la table 7.1
 de \cite{BFL}.
 Pour les num\'eros 3 et 5, on a donc $H^1(k, \Pic(\overline{X})) \neq 0$.
 Pour les autres, on a $H^1(k,\Pic(\overline{X})) = 0$.
 
 Pour le num\'ero 1, les valeurs propres de  $Frob$  sont $1, 3^2, 12^4$,
 c'est-\`a-dire $1$, les deux racines cubiques primitives de l'unit\'e et les 4 racines primitives
 12-i\`emes de l'unit\'e. Si on remplace $Frob$ par $Frob^4$,  c'est-\`a-dire si on passe 
 \`a l'extension $k'/k$ de degr\'e 4, les valeurs propres de $Frob^4$ sont $1, 3^6$. Dans la table, seul
 le num\'ero 3 a ces valeurs propres. Et \`a ce niveau on a $H^1(k', \Pic(\overline{X})) \neq 0$.
 
 Pour le num\'ero 2, les valeurs propres de $Frob$ sont $1,3^2,6^4$. 
 Si on remplace $Frob$ par $Frob^2$,  c'est-\`a-dire si on passe  \`a l'extension $k'/k$ de degr\'e 2,
 les valeurs propres de $Frob^2$  sont
  $1, 3^6$. Dans la table, seul
 le num\'ero 3 a ces valeurs propres. Et \`a ce niveau on a $H^1(k', \Pic(\overline{X})) \neq 0$.
 
 Pour le num\'ero 4,  les valeurs propres de $Frob$  sont $1, 9^6$. 
 Si on remplace $Frob$ par $Frob^3$, c'est-\`a-dire si on passe  \`a l'extension $k'/k$ de degr\'e 3,
 les valeurs propres de $Frob^3$  sont
  $1, 3^6$. Dans la table, seul le num\'ero 3 a ces valeurs propres. Et \`a ce niveau on a $H^1(k', \Pic(\overline{X})) \neq 0$.
\end{proof}

 \begin{prop}\label{dp2}
  Soit  $X$ une surface de del Pezzo de degr\'e 2 sur un corps~$k$, 
  d\'eploy\'ee par une extension cyclique $K/k$.
  Supposons que $X$ est $k$-minimale.
Il existe alors une extension finie s\'eparable de corps
  $k'/k$ telle que $H^1(k', \Pic(\X)) \neq 0.$
 \end{prop}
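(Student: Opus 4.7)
Mon plan est de suivre exactement la m\^eme strat\'egie que dans la d\'emonstration de la Proposition \ref{dp3}, en rempla\c{c}ant les tables pour les surfaces cubiques (li\'ees au groupe de Weyl $W(E_6)$) par les tables analogues pour les surfaces de del Pezzo de degr\'e $2$, li\'ees au groupe de Weyl $W(E_7) = \operatorname{Aut}(\Pic(\overline{X}), K_X)$. De telles tables se trouvent chez Urabe \cite{U} et dans la litt\'erature qui le suit.

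Puisque $X$ est d\'eploy\'ee sur l'extension cyclique $K/k$, l'action du groupe de Galois sur le r\'eseau $\Pic(\overline{X})$, de rang $8$, se factorise par $\Gal(K/k)$, et l'image du g\'en\'erateur $Frob$ dans $W(E_7)$ d\'etermine une classe de conjugaison. La table associe \`a chaque telle classe le symbole caract\'eristique de $Frob$, un indice $i(X)$ qui mesure l'existence d'ensembles galoisiens de diviseurs $(-1)$-exceptionnels $k$-contractibles, et la valeur de $H^1(\langle Frob\rangle, \Pic(\overline{X}))$.

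La premi\`ere \'etape serait d'isoler les classes $k$-minimales, c'est-\`a-dire celles v\'erifiant $i(X) = 0$. Pour les classes minimales o\`u la table indique d\'ej\`a $H^1(k, \Pic(\overline{X})) \neq 0$, on prend $k' = k$ et il n'y a rien de plus \`a faire. Pour chacune des classes minimales restantes (pour lesquelles $H^1(k, \Pic(\overline{X})) = 0$), je chercherais un entier $r > 1$ tel que le symbole caract\'eristique de $Frob^r$, calcul\'e via la r\`egle de Frame rappel\'ee avant la Proposition \ref{dp3}, co\"incide avec celui d'une classe de la table ayant cohomologie non triviale. L'extension interm\'ediaire $k'/k$ correspondant au sous-groupe $\langle Frob^r \rangle$ satisferait alors $H^1(k', \Pic(\overline{X})) \neq 0$.

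La principale difficult\'e \`a laquelle je m'attends est la taille accrue du tableau des classes de conjugaison, $W(E_7)$ en poss\'edant nettement plus que $W(E_6)$, ce qui rendra la v\'erification cas par cas plus lourde. Une difficult\'e suppl\'ementaire \'eventuelle serait l'existence de classes distinctes partageant le m\^eme symbole caract\'eristique et ayant des valeurs diff\'erentes de $H^1$; dans un tel cas il faudrait raffiner l'identification en comparant non seulement les polyn\^omes caract\'eristiques mais aussi la structure pr\'ecise de l'action sur le r\'eseau. N\'eanmoins, la d\'emarche reste m\'ecanique une fois les tables en main, et devrait aboutir pour chaque classe $k$-minimale \`a cohomologie triviale.
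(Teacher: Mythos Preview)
Your plan is exactly the approach the paper takes: it uses Urabe's Table~1 of Frame symbols for $W(E_7)$, restricts to the classes of index~$0$ (cases 2--19; case~1 is flagged as an error in Urabe's table, being in fact non-minimal), and for each minimal class with trivial $H^1$ (cases 5, 6, 7, 15, 16, 17, 18, 19) exhibits an explicit power $Frob^r$ whose Frame symbol matches a unique class in the table with nontrivial $H^1$. The ambiguity you anticipate, of distinct classes sharing a Frame symbol, does not arise in the cases needed here.
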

 \begin{proof}
 On utilise ici la table 1 de l'article \cite{U} de T. Urabe
 et   les symboles de Frame.
 
Il suffit de discuter les surfaces num\'erot\'ees de  1 \`a 19, qui correspondent \`a des
surfaces $k$-minimales.
Le cas 1 de la table 1, comme Daniel Loughran me l'a signal\'e, contient une erreur.
Son indice n'est pas 0, il est au moins 2, la surface n'est pas $k$-minimale.
On ne discute donc que les cas 2 \`a 19.

Pour les surfaces avec $H^1(k, \Pic(\ovX))  \neq 0$ il n'y a rien \`a faire.

 Dans chacun des cas ci-dessous, on consid\`ere une puissance $Frob^r$
 de $Frob$ et on note $k'$ le corps fixe de $Frob^r$.

 Cas 5. En prenant $Frob^5$, on trouve
     $1^{-4}.2^6$ comme nouveau symbole de Frame.
     La seule possibilit\'e est le cas~2, si $k'$ est le corps fixe de $Frob^5$,
      on  a  $H^1(k',\Pic(\overline{X})) \neq 0$.
 
 Cas 6.  En prenant $Frob^2$, on trouve
   $4^2$. La seule possibilit\'e est le cas~3, on a $H^1(k',\Pic(\overline{X})) \neq 0$.
 
 Cas 7. En prenant $Frob^3$, on trouve $1^{-4}.2^6$.
  La seule possibilit\'e est le cas~2, on a $H^1(k',\Pic(\overline{X})) \neq 0$.
  
 Cas 15. En prenant $Frob^9$, on trouve  $1^{-4}.2^6$.
 La seule possibilit\'e est le cas~2, on a $H^1(k',\Pic(\overline{X})) \neq 0$.

 Cas 16. En prenant $Frob^7$, on trouve  $1^{-6}. 2^7$.
 La seule possibilit\'e est le cas~8, on a $H^1(k',\Pic(\overline{X}) ) \neq 0$.
 
 Cas 17. En prenant $Frob^3$, on trouve $1^{-2}.2^1.4^2$. 
 La seule possibilit\'e est le cas~9, on a $H^1(k',\Pic(\overline{X})) \neq 0$.
 
 Cas 18. En prenant $Frob^3$, on trouve $1^{-1}.2^2.5^{-1}.10^1$.
 La seule possibilit\'e est le cas~13, on a $H^1(k',\Pic(\overline{X})) \neq 0$.
 
 Case 19.  En prenant $Frob^3$, on trouve  $1^{-6}.2^7$. 
La seule possibilit\'e est le cas~8, on a $H^1(k',\Pic(\overline{X})) \neq 0$.
  \end{proof}

   \begin{rema}
  Dans \cite{L}, l'auteur mentionne trois types de  surfaces de del Pezzo  de degr\'e 2
  dans la table 1 d'Urabe qui auraient tous leurs groupes $H^1(k',\Pic(\overline{X}))$  triviaux. Ce sont les
  types 1, 5 et 16. Pour le cas 1, nous avons  vu que ce n'est pas une surface $k$-minimale.
  Pour les deux autres cas, il doit s'agir d'une erreur de calcul dans \cite{L}.
   \end{rema}
 
  \begin{prop}\label{dp1}
 Soit  $X$ une surface de del Pezzo de degr\'e 1 sur un corps~$k$, 
  d\'eploy\'ee par une extension cyclique $K/k$.
 Supposons que $X$ est $k$-minimale.
Il existe alors une extension finie s\'eparable
  $k'/k$ telle que $H^1(k', \Pic(\X)) \neq 0.$
 \end{prop}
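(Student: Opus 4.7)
The plan is to argue exactly in parallel with the preceding Propositions \ref{dp3} and \ref{dp2}, using the classification tables for cyclic Galois actions on the Picard lattice of a del Pezzo surface of degree 1. The relevant lattice is the orthogonal complement of the canonical class in the rank 9 Picard group, on which the Weyl group $W(E_{8})$ acts. The cyclic subgroups of $W(E_{8})$ and their actions have been tabulated in \cite{U} (table for degree 1) and, more recently and with corrections, in \cite{BFL}; I would rely on those tables exactly as in the proof of Proposition \ref{dp2}.

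First I would restrict attention to the rows of the table whose index $i(X)$ equals $0$, since only $k$-minimal surfaces are under consideration. Among these, the rows for which $H^{1}(k,\Pic(\overline X))$ is already nonzero require no further argument. For each of the remaining rows, i.e.\ those $k$-minimal cases with $H^{1}(k,\Pic(\overline X))=0$, I would imitate the method of Proposition \ref{dp2}: pick a suitable power $Frob^{r}$ of the generator of $\Gal(K/k)$, pass to the fixed field $k'=K^{\langle Frob^{r}\rangle}$, compute the Frame symbol (resp.\ characteristic symbol) of $Frob^{r}$ on $\Pic(\overline X)$ using the substitution rule recalled before Proposition \ref{dp3} (replace $(t^{m}-1)$ by $(t^{w}-1)^{u}$ with $r=uv$, $m=uw$, $(v,w)=1$), and then identify the resulting symbol with a row of the table whose $H^{1}$ is nonzero. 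Since $\Pic(\overline X)$ is unchanged by base change from $k$ to $k'$, the nonvanishing read off from the table gives the desired $H^{1}(k',\Pic(\overline X))\neq 0$.

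The core work is therefore a finite, explicit case-by-case verification, entirely analogous to Propositions \ref{dp3} and \ref{dp2} but with more rows to inspect because the Weyl group $W(E_{8})$ has substantially more cyclic conjugacy classes than $W(E_{6})$ or $W(E_{7})$. For each $k$-minimal row with vanishing $H^{1}$ I would look for the smallest $r>1$ such that the symbol of $Frob^{r}$ matches a row with nonvanishing $H^{1}$; typically $r$ will be the order of a suitable cyclotomic factor appearing in the characteristic polynomial (so $r=2,3,4,5,$ etc., as in the preceding proposition).

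The main obstacle is bookkeeping: one must be careful that the minimality condition is correctly identified from the table (the erratum in \cite{L} discussed after Proposition \ref{dp2} and the correction to Urabe's row 1 in degree 2 are warnings that these tables contain transcription errors), and one must verify that in each targeted case the symbol of $Frob^{r}$ uniquely identifies a single row of the table with $H^{1}\neq 0$, rather than an ambiguous or degenerate entry. In the rare situations where two rows share the same symbol, one of them will already have $H^{1}\neq 0$, which is enough for our purposes. Modulo this careful inspection of the tables of \cite{U} and \cite{BFL}, the proof is complete.
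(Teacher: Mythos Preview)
Your approach is exactly the paper's: restrict to the rows of Urabe's Table~2 (degree~1) with index $0$, and for each such row with $H^1=0$ exhibit a power $Frob^r$ whose Frame symbol matches a row with $H^1\neq 0$. The paper carries this verification out explicitly for the twelve relevant cases (rows 5, 6, 7, and 29--37), giving in each instance the value of $r$ (always one of $2,3,5,10$) and the unique target row.

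Two minor corrections. First, \cite{BFL} tabulates only degree~3; for degree~1 the paper relies solely on \cite{U}. Second, your fallback claim---that if two rows share a Frame symbol and one of them has $H^1\neq 0$ then ``that is enough''---is not valid as stated: the Frame symbol records only the characteristic polynomial of $Frob^r$, so a shared symbol does not by itself tell you which conjugacy class you land in, and hence which $H^1$ you get. Fortunately, in the paper's verification each target symbol corresponds to a unique row of the table, so the issue never arises; but the reasoning you offer for the ambiguous case would not suffice.
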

 \begin{proof}
  On utilise ici l'article \cite{U} de Urabe et sa   table~2. 
  On ne consid\`ere que les surfaces d'indice  0. 
  Si  $H^1(k,\Pic(\overline{X})) \neq 0$, on a fini.
 Sinon, on est dans l'un des cas suivants, 
 pour lesquels on   utilise les symboles de Frame.
 On note $Frob$ un g\'en\'erateur de $\Gal(K/k)$.
 Dans chacun des cas ci-dessous, on consid\`ere une puissance $Frob^r$
 de $Frob$ et on note $k'$ le corps fixe de $Frob^r$.
   
 Cas 5.  En prenant $Frob^3$,  on trouve $1^1.2^{-2}.4^3$ comme nouveau symbole de Frame.
  La seule possibilit\'e est le cas 3,  on a $H^1(k',\Pic(\overline{X})) \neq 0$.
 
 Cas 6.  En prenant $Frob^5$,  on trouve
    $1^{-3}.2^4.4^1$. La seule possibilit\'e est le cas 1, on a
   $H^1(k',\Pic(\overline{X})) \neq 0$.
   
 Case 7. En prenant $Frob^3$,  on trouve
  $1^{-1}.2^3.4^{-1}.8^1$. La seule possibilit\'e est le cas 4, on a $H^1(k',\Pic(\overline{X})) \neq 0$.

 Cas 29. En prenant $Frob^{10}$,  on trouve
 $1^{-3}.3^4$. 
 La seule possibilit\'e est le cas  9,  on a $H^1(k',\Pic(\overline{X})) \neq 0$.
  
 Cas 30. En prenant $Frob^3$,  on trouve
 $1^1.4^{-2}.8^2$. 
La seule possibilit\'e est le cas 23, on a $H^1(k',\Pic(\overline{X})) \neq 0$.
 
  Cas 31. En prenant $Frob^5$,  on trouve
  $1^1.2^{-4}.4^4$. 
 La seule possibilit\'e est le cas
  17, on a $H^1(k',\Pic(\overline{X})) \neq 0$.
   
 Cas 32. En prenant $Frob^3$,  
  on trouve $1^1.2^{-4}.4^4$. 
 La seule possibilit\'e est le cas  17, on a $H^1(k',\Pic(\overline{X})) \neq 0$.

 Cas 33. En prenant $Frob^2$, on trouve
  $9^1$. 
 La seule possibilit\'e est le cas    14,  on a $H^1(k',\Pic(\overline{X})) \neq 0$.
 
 Cas 34. En prenant $Frob^3$,  on trouve
 $1^{-1}.5^2$. 
 La seule possibilit\'e est le cas    11, on a $H^1(k',\Pic(\overline{X})) \neq 0$.
 
 Cas 35. En prenant $Frob^2$, on trouve $1^{-1}.5^2$. 
La seule possibilit\'e est le cas  11, $H^1(k',\Pic(\overline{X})) \neq 0$.
 
 Cas 36. En prenant $Frob^3$,  on trouve
    $1^{-3}.2^2.4^2$. 
 La seule possibilit\'e est le cas 10, $H^1(k',\Pic(\overline{X})) \neq 0$.

 Cas 37. En prenant $Frob^2$, on trouve 
  $1^{-3}.3^4$. 
 La seule possibilit\'e est le cas 9, on a $H^1(k',\Pic(\overline{X})) \neq 0$.
   \end{proof}
   
   \begin{rema} Comme le note un rapporteur, dans les propositions  \ref{dp1} et \ref{dp2} un certain nombre   de cas
   rel\`event  aussi du cas des surfaces fibr\'ees en coniques, d\'ej\`a trait\'ees dans la proposition
   \ref{fibreconiques}. 
      \end{rema}
   
   \begin{rema}
   L'\'enonc\'e du th\'eor\`eme 5.4.3  de \cite{L} est identique \`a l'\'enonc\'e ci-dessus.
   Il convient n\'eanmoins de corriger la d\'emonstration donn\'ee dans \cite{L} pour le cas 6,
   car le choix fait l\`a de $Frob^4$ m\`ene, comme le dit l'auteur \`a la surface
   num\'ero 102, mais cette derni\`ere a $H^1=0$.
       \end{rema}
  
  \bigskip 
 
 \section{Conclusion}
 
 R\'ecapitulons. Il s'agit de montrer :
 
 \begin{theo}\label{vraiprincipalbis}
Soient $k$ un corps et $X$ une $k$-surface projective, lisse, g\'eom\'etriquement rationnelle.
Supposons que $X$ poss\`ede un point $k$-rationnel et que
$X$ soit d\'eploy\'ee par une extension cyclique de $k$.
Si $X$ n'est pas  $k$-rationnelle, alors :

(i) Il existe une extension $k'/k$ finie s\'eparable telle que
 $H^1(k', \Pic(\X))\neq 0$.
  
 (ii) 
Le module galoisien $\Pic(\X)$ n'est pas un facteur direct d'un module de
permutation.

(iii) 
La $k$-vari\'et\'e $X$ n'est pas stablement $k$-rationnelle.
  \end{theo}

  \begin{proof} 
  On peut supposer que $X$ est $k$-minimale, car si $f : Y \to X$ est un $k$-morphisme birationnel de $k$-surfaces projectives lisses
  g\'eom\'etriquement rationnelles, si $Y$ est d\'eploy\'ee par une extension cyclique de $k$, il en est de m\^{e}me de $X$.
  D'apr\`es le th\'eor\`eme \ref{classif}, on peut en outre supposer  que $X$   ou bien est une surface de del Pezzo 
  $k$-minimale de degr\'e $d$ avec $1 \leq d \leq 9$, ou bien est munie d'une fibration en coniques relativement minimale au-dessus de $\P^1_{k}$.

  Si $X$ est une surface de del Pezzo de degr\'e $d$ avec $5 \leq d \leq 9$, alors
  $X$ est $k$-rationnelle d'apr\`es les propositions \ref{C1} et \ref{granddp}.

  Si $X$ est une surface de del Pezzo $k$-minimale de degr\'e $d=4$, resp. $d=3$, resp.
  $d=2$, resp. $d=1$, alors d'apr\`es la proposition  \ref{dp4}   , resp. \ref{dp3},   resp.  \ref{dp2}, resp. \ref{dp1}, 
  il existe une extension finie
 s\'eparable  $k'/k$ avec $H^1(k', \Pic(\X))\neq 0$.

  Si $X$ est munie d'une fibration en coniques $X \to \P^1_{k}$ relativement $k$-minimale,
  les propositions \ref{auplus3} et \ref{fibreconiques}   assurent que soit $X$ est $k$-rationnelle,  soit il existe une extension finie
 s\'eparable  $k'/k$ avec $H^1(k', \Pic(\X))\neq 0$.  
 
Ceci donne (i), et les autres \'enonc\'es suivent (Th\'eor\`eme \ref{implgen}).
 \end{proof}

\begin{rema}

On aimerait avoir une d\'emonstration  du th\'eor\`eme \ref{vraiprincipalbis}
qui ne passe pas par l'analyse cas par cas utilis\'ee dans le pr\'esent article, et sp\'ecialement
qui \'evite celle utilis\'ee pour les surfaces de del Pezzo de degr\'e $1$ \`a $3$.
Pour les termes employ\'es dans ce qui suit, on renvoie \`a \cite{CTSDesc}.
Soit $k$ un corps quasi-fini,  et 
soit $X$ une  $k$-surface projective, lisse, g\'eom\'etriquement rationnelle,
poss\'edant un $k$-point.
Comme le groupe de Galois absolu de $k$ est procyclique,
l'hypoth\`ese  $H^1(k', \Pic(\X))=0$ pour toute extension finie $k'/k$,
implique que le module galoisien $\Pic(\X)$ est un facteur direct d'un module de permutation
(th\'eor\`eme d'Endo et Miyata, cf. \cite[Prop. 2, p. 184]{CTSRequiv}). Le corps $k$ est de dimension cohomologique 1.
Soit $S$ le $k$-tore dual du module galoisien $\Pic(\X)$. Il existe alors un unique
torseur universel ${\mathcal T} \to X$ sur $X$ (\`a isomorphisme pr\`es).
C'est un torseur sous le $k$-tore $S$, lequel est un facteur direct d'un $k$-tore
quasi-trivial. Ce torseur est donc g\'en\'eriquement scind\'e (th\'eor\`eme 90 de Hilbert).
La $k$-vari\'et\'e ${\mathcal T}$ est donc $k$-birationnelle au produit
$X \times_{k} S$. C'est une question ouverte de savoir si l'espace total d'un torseur universel
 ${\mathcal T}$  avec un $k$-point au-dessus d'une surface g\'eom\'etriquement rationnelle $X$ est une vari\'et\'e (stablement) $k$-rationnelle. Si c'\'etait le cas, 
l'argument ci-dessus montrerait au moins que, sous l'hypoth\`ese
 $H^1(k', \Pic(\X))=0$ pour toute extension finie $k'/k$,  la surface $X$
 est facteur direct d'une $k$-vari\'et\'e $k$-rationnelle.
\end{rema}  
       
      \bigskip
      
    {\bf Remerciements}.   A. Pirutka m'a  signal\'e la question de B. Hassett.
    Je remercie K. Shramov de m'avoir montr\'e la proposition 
    \ref{dp3} (A. Trepalin) pour les surfaces cubiques.  
 Je remercie D. Loughran de m'avoir donn\'e des pr\'ecisions sur l'article \cite{U}.
 Apr\`es avoir vu une premi\`ere version du pr\'esent article,
il a aussi attir\'e mon attention sur l'article \cite{L}, dont certains des
 calculs pour les surfaces de del Pezzo co\"{\i}ncident avec ceux des propositions \ref{dp2} et \ref{dp1} ci-dessus. Des erreurs dans \cite{L}
 n'ont pas permis \`a l'auteur d'obtenir le r\'esultat g\'en\'eral pour les surfaces de del Pezzo de degr\'e 2.
 Les critiques de  deux rapporteurs sur la version initiale de cet article m'ont permis de pr\'eciser certains points.


\begin{thebibliography}{99}

 
\bibitem{BFL}  B. Banwait, F. Fit\'e et D. Loughran,
 Del Pezzo surfaces over finite fields and their Frobenius traces,
 Mathematical Proceedings of the Cambridge Philosophical Society, \`a para\^{\i}tre.
https://arxiv.org/pdf/1606.00300.pdf

\bibitem{BCTSaSD} A. Beauville, J.-L. Colliot-Th\'el\`ene, J.-J. Sansuc et Sir Peter Swinnerton-Dyer,
Vari\'et\'es stablement rationnelles non rationnelles,
Ann. of Math.  {\bf 121} (1985) 283--318.

\bibitem{CTBarbara}  J.-L. Colliot-Th\'el\`ene, Birational invariants, purity, and the Gersten conjecture,
in  {\it K-Theory and Algebraic Geometry: Connections with Quadratic Forms and Division Algebras}, AMS Summer Research Institute, Santa Barbara 1992, ed. W. Jacob and A. Rosenberg, Proceedings of Symposia in Pure Mathematics {\bf 58}, Part I (1995) 1--64.

\bibitem{CTCoray} J.-L. Colliot-Th\'el\`ene et D. Coray,  L'\'equivalence rationnelle sur les points ferm\'es des surfaces rationnelles fibr\'ees en coniques, Compositio Mathematica {\bf 39} no. 3 (979) 301--332.



\bibitem{CTSRequiv}   J.-L. Colliot-Th\'el\`ene et J.-J. Sansuc, La R-\'equivalence sur les tores,
Ann. sci. \'Ec. Norm. Sup.  (4) {\bf 10}  (1977) 175--229.


\bibitem{CTSDesc}   J.-L. Colliot-Th\'el\`ene et J.-J. Sansuc, La descente sur les vari\'et\'es rationnelles, II,  Duke Math. J. {\bf 54}  (1987) 375--492.

\bibitem{CTSDMJ81} J.-L. Colliot-Th\'el\`ene et  J.-J. Sansuc, On the Chow group of rational surfaces: a sequel to a paper of S.~Bloch, Duke Math. J.  {\bf 48} no. 2  (1981) 421--447.

\bibitem{Frame} J. S. Frame, The classes and representations of the groups of 27 lines and 28 bitangents,
Ann. Math. Pura Appl. (4) {\bf 32} (1951), 83--119.

\bibitem{Fulton} W. Fulton, {\it Intersection Theory}, Ergebnisse der Mathematik und ihrer Grenzgebiete, 3. Folge, Band 2, Springer-Verlag (1984).

\bibitem{Gille} S. Gille, Permutation modules and Chow motives of geometrically rational surfaces,
with appendix by J.-L. Colliot-Th\'el\`ene, J. Algebra {\bf 440} (2015) 443--463.

\bibitem{GrBr3} A. Grothendieck, Le groupe de Brauer, III, in {\it Dix expos\'es sur la cohomologie des sch\'emas}, Masson, North-Holland, Paris, 1968, p. 88--188.

 

\bibitem{Isk70} V. A. Iskovskikh, Surfaces rationnelles avec un pinceau de courbes rationnelles et avec carr\'e de la classe canonique
positif (en russe),
Mat. Sbornik {\bf 83} (125)  no. 1 (1970) 90--119. Trad. ang. Math. USSR-Sb. {\bf 12} (1970), 91--117.

\bibitem{Isk72} V. A. Iskovskikh, Propri\'et\'es birationnelles d'une surface de degr\'e 4 dans ${\bf P}^4_{k}$,
Mat. Sbornik {\bf 88} (130)  no. 1  (1972), trad. ang. Math. USSR Sbornik {\bf 17} no. 1 (1972).

\bibitem{Isk79} V. A. Iskovskikh,  Mod\`eles minimaux des surfaces rationnelles sur un corps
quelconque (en russe),
Izv. Akad. Nauk SSSR Ser. Mat. {\bf 43} (1979), no. 1, 19--43, 237, trad. ang.  Math. USSR Izvestija {\bf 14} no1 (1980) 17--39.

\bibitem{kollar} J. Koll\'ar, Rational Curves on Algebraic Varieties, Ergebnisse der Mathematik und ihrer Grenzgebiete,
3. Folge, Band 3, Springer-Verlag, Berlin Heidelberg (1996).

\bibitem{L} Shuijing Li, Rational points on del Pezzo surfaces of degree 1 and 2,  th\`ese, Rice University,
https://arxiv.org/pdf/0904.3555.pdf

\bibitem{M}  Yu. I. Manin, {\it Formes cubiques}, Nauka, Moscou, 1972 (en russe).
Traduction en anglais (avec une num\'erotation diff\'erente), 2\`eme \'edition, North Holland 1986. 
 
  
\bibitem{P}  A. Pirutka, Varieties that are not stably rational, zero-cycles and unramified cohomology,
in Proc. AMS 2015 Summer Conference (Salt Lake City).

\bibitem{serre}
 J-P. Serre, Corps locaux, Publications de l'Institut de Math\'ematique de l'Universit\'e de Nancago,
 Actualit\'es scientifiques et industrielles {\bf 1296}, Hermann, Paris, 1968.
 
 \bibitem{serre2}  J-P. Serre, Cohomological invariants, Witt invariants and trace forms, in {\it Cohomological invariants in Galois Cohomology}, University Lecture Series {\bf 28} (2003) Amer. Math. Soc.

\bibitem{SwD} H.P.F.  Swinnerton-Dyer,  The zeta-function of a cubic surface over a finite field,
Math. Proc. Camb. Phil. Soc. {\bf 63} (1967) 55--71.
 
\bibitem{U} T. Urabe, Calculations of Manin's invariant for del Pezzo surfaces,
 Mathematics of computations, Volume {\bf 65}, Number 213,
 Jan. 1996, 247--258. Supplement, {\bf 65}, no. 213 (Jan. 1996) p. S15--S23.
 

\bibitem{VA}  A. V\'arilly-Alvarado, Arithmetic of del Pezzo surfaces, in
{\it Birational geometry, rational curves, and arithmetic} 
(F. Bogomolov, B. Hassett and Y. Tschinkel eds.) Simons Symposia {\bf 1} (2013), 293--319. 



\end{thebibliography}
\end{document}